\newcommand{\beq}{\begin{equation}}
\newcommand{\eeq}{\end{equation}}
\newcommand{\bea}{\begin{eqnarray}}
\newcommand{\eea}{\end{eqnarray}}
\newcommand{\beas}{\begin{eqnarray*}}
\newcommand{\eeas}{\end{eqnarray*}}
\newtheorem{theorem}{Theorem}[section]
\newtheorem{definition}[theorem]{Definition}
\newtheorem{proposition}[theorem]{Proposition}
\newtheorem{corollary}[theorem]{Corollary}
\newtheorem{remark}[theorem]{Remark}
\newtheorem{example}[theorem]{Example}
\newtheorem{examples}[theorem]{Examples}
\newtheorem{foo}[theorem]{Remarks}
\title{The subelliptic heat kernel on the three dimensional solvable Lie groups
}
\author{Fabrice Baudoin}\thanks{The first author was supported in part by NSF Grant DMS 0907326}
\author{Matthew Cecil}
\begin{document}

\maketitle

\begin{abstract}
We study the subelliptic heat kernels of the CR three dimensional solvable Lie groups.  We first classify all left-invariant sub-Riemannian structures on three dimensional solvable Lie groups and obtain representations of these groups.  We give expressions for the heat kernels on these groups and obtain heat semigroup gradient bounds using a new type of curvature-dimension inequality.
\end{abstract}


\section{Introduction}\label{s.1}

The motivation of this work is to study the subelliptic  heal kernel and related functional inequalities on the CR three dimensional solvable Lie groups. This is a natural complement to the papers \cite{BB1,Bo} and \cite{BBBC,Ga} that study the subelliptic  heat kernel on the semisimple and nilpotent CR three dimensional Lie groups respectively. It also complements the papers \cite{BW1, W} that study the subelliptic heat kernel on CR Sasakian model spaces.

Suppose $\mathfrak{g}$ is a Lie algebra and $H\subset \mathfrak{g}$ is a subspace endowed with an inner product $\langle\cdot,\cdot\rangle$.  If $G$ denotes a Lie group with Lie algebra $\mathfrak{g}$, then the left-invariant extension of $(H,\langle\cdot,\cdot\rangle)$ determines a smooth distribution and hence a sub-Riemannian structure on $G$.  Conversely, to any sub-Riemannian manifold $(G,M,g)$, where $G$ is a Lie group and $M$ and $g$ are left-invariant, we can associate the triple $(\mathfrak{g},H,\langle\cdot,\cdot\rangle)$, where $\mathfrak{g}$ is the Lie algebra of $G$, $H=M_e$, and $\langle \cdot,\cdot\rangle=g_e$, where $e$ denotes the identity of $G$.

In this work, we will be concerned with left-invariant sub-Riemannian structures and the subelliptic heat kernel on 3-dimensional solvable Lie groups.   It will be assumed throughout that $G$ is a 3-dimensional solvable Lie group with Lie algebra $\mathfrak{g}$ and $H\subset\mathfrak{g}$ is a 2-dimensional subspace endowed with an inner product $\langle\cdot,\cdot\rangle$ which satisfies H\"{o}rmander's condition: $H$ generates $\mathfrak{g}$ under iterated Lie brackets.  We will refer to $H$ as the \textit{horizontal subspace} of $\mathfrak{g}$.  Except for a brief foray into the general case found in Section \ref{s.4.4}, any triple $(\mathfrak{g},H,\langle \cdot,\cdot\rangle)$ discussed will be assumed to be of this form.  We will also use throughout the identification of elements of the Lie algebra $\mathfrak{g}$ with left-invariant differential operators on $G$.

As we show in Section \ref{s.2.1},  to any triple $(\mathfrak{g},H,\langle\cdot,\cdot\rangle)$ we can associate a basis in which the Lie algebra relations take the form
\[
[X,Y]=Z\qquad [X,Z]=\alpha Y+\beta Z\qquad [Y,Z]=0.
\]
where $\alpha \in \mathbb{R}$ and $\beta  \ge 0$ are two parameters. In the underlying canonical  CR structure, the Reeb vector field $R$ is given by
\[R=-\beta Y+Z.\]
 We show that the parameter $\alpha$ is a torsion parameter while  $\beta$ is a curvature parameter. The CR sub-Laplacian is then  the left invariant and subelliptic diffusion operator
\[
 L=X^2+Y^2-\beta X.
 \]
It is symmetric with respect to a left-invariant Haar measure $\mu$ on $G$. The study of functional inequalities related to the Dirichlet form
 \[
 \mathcal{E}(f,g)=-\int_G fLg ~d\mu=\int_G (Xf Xg +YfYg) ~d\mu
\]
is of special interest, because no general techniques are currently known to handle this type of subelliptic operators. The major hurdle to bypass is that, for $\alpha \neq 0$, the CR structure is not of Sasakian type and thus the techniques introduced in  \cite{BB2}  or  \cite{BG1} do not apply.  However, we show here that $L$ satisfies the following generalized curvature dimension inequality: For every $f\in C^\infty$ and $\nu>0$,
\begin{equation*}
\Gamma_2(f)+\nu\Gamma_2^R(f)\geq \frac{1}{2}(Lf)^2+\frac{1}{2}(1-\nu^2\alpha^2)\Gamma^R(f)+(-\alpha^+-\beta^2-\frac{1}{\nu})\Gamma(f),
\end{equation*}
where $\alpha^+=\max{\{\alpha,0\}}$,
\[\Gamma(f,g)=(Xf)(Xg)+(Yf)(Yg), \Gamma^R(f,g)=(Rf)(Rg),\]
\[\Gamma_2(f)=\frac{1}{2}L\Gamma(f)-\Gamma(f,Lf), \Gamma_2^R(f)=\frac{1}{2}L\Gamma^R(f)-\Gamma^R(f,Lf).\]
along with the convention $\Gamma(f)=\Gamma(f,f)$.  This new type of curvature dimension inequality, that also has been shown to be satisfied on more general contact manifolds in \cite{BW2}, opens the door for the study of functional inequalities related to $L$. In particular, we are able to deduce only from it gradient bounds for the heat semigroup $P_t$: If $T>0$ is small enough, then the following estimate holds
\begin{equation*}
\Gamma(P_Tf)+\frac{1}{|\alpha|}\Gamma^R(P_Tf)\leq \frac{\kappa e^{2\kappa T}}{\kappa+|\alpha|(1-e^{2\kappa T})}P_T(\Gamma(f))+\frac{1}{|\alpha|}P_T(\Gamma^R(f)),
\end{equation*}
where $\kappa=\beta^2+\alpha^+$.  We are also able to prove a reverse Poincar\'{e}-type inequality.

\

Another important aspect of our work is the study of explicit formulas for the integral heat kernel of $P_t$. In particular by working with suitably chosen faithful representations of the Lie group, we are able  to establish connections with some exponential functionals of the Brownian motion and therefore to deduce useful probabilistic representations of the heat kernel. We also describe a technique for obtaining spectral representations of the heat kernel and carry this out in one particular case.

\section{sub-Riemannian Structures on $3$-dimensional Solvable Lie Groups}\label{s.2}

The isomorphism classes of 3-dimensional Lie algebras are well documented (see, for example, \cite{FH,J}).  In \cite{AB}, Agrachev and Barilari have provided a classification of sub-Riemannian structures on 3-dimensional Lie groups in terms of two differential invariants, $\chi$ and $\kappa$ (see also \cite{VG} for a related discussion).  We will find it more convenient to classify sub-Riemannian structures on 3-dimensional solvable Lie groups using two different parameters $\alpha$ and $\beta$ which are closely tied to the algebraic structure of the Lie algebra and, as we show in Section \ref{s.2.2}, the geometry of the Tanaka-Webster connection on $G$.

In Section \ref{s.2.1}, we will show that to any triple $(\mathfrak{g},H,\langle\cdot,\cdot\rangle)$ we can associate two real parameters $\alpha$ and $\beta\geq 0$.  Any two triples with the same parameters are related by an Lie algebra isomorphism which acts on as an isometry between horizontal subspaces.  Furthermore, simply scaling the metric on the horizontal subspace changes these parameters in a readily identifiable way.

In Section \ref{s.2.2}, we elaborate on the sub-Riemannian geometry of a Lie group with fixed parameters $(\alpha,\beta)$.  In particular, we exhibit the Reeb vector field and canonical sub-Laplacian.  

\subsection{Classification}\label{s.2.1}

As always, let $(\mathfrak{g},H,\langle\cdot,\cdot\rangle)$ denote triple where $\mathfrak{g}$ is a 3-dimensional solvable Lie algebra, $H\subset \mathfrak{g}$ is a 2-dimensional horizontal subspace satisfying H\"{o}rmander's condition, and $\langle \cdot,\cdot\rangle$ is an inner product on $H$.  Let $\mathfrak{g}'=[\mathfrak{g},\mathfrak{g}]$ denote the derived subalgebra of $\mathfrak{g}$.  Following \cite{FH}, we will refer to the dimension of $\mathfrak{g}'$ as the \textit{rank} of the Lie bracket (viewed as a linear map $[\cdot,\cdot]:\bigwedge^2\mathfrak{g}\rightarrow \mathfrak{g}$), or just simply the rank of $\mathfrak{g}$.  The rank of $\mathfrak{g}$ is either $1$ or $2$; rank $0$ implies that $\mathfrak{g}$ is commutative and contradicts the existence of $H$, while rank $3$ contradicts the solvability of $\mathfrak{g}$.

The following theorem provides the parameters $\alpha$ and $\beta$ we use to classify triples $(\mathfrak{g},H,\langle\cdot,\cdot\rangle)$.
\begin{proposition}\label{p.2.1}
Given a triple $(\mathfrak{g},H,\langle\cdot,\cdot\rangle)$, there exist orthonormal vectors $X,Y\in H$ and a vector $Z\in \mathfrak{g}'$ such that
\begin{equation}
[X,Y]=Z\qquad [X,Z]=\alpha Y+\beta Z\qquad [Y,Z]=0\label{e.2.1}
\end{equation}
for some real $\alpha$ and $\beta\geq 0$.
\end{proposition}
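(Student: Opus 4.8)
The plan is to split into two cases according to the rank of $\mathfrak{g}$, which by the preceding discussion is either $1$ or $2$, and to exploit the classical fact that for a solvable Lie algebra $\mathfrak{g}$ the derived subalgebra $\mathfrak{g}'$ is nilpotent; in particular, when $\dim\mathfrak{g}'=2$ it is abelian. It is convenient to rephrase H\"ormander's condition as $H+[H,H]=\mathfrak{g}$: since $\dim H=2$ the space $[H,H]$ is spanned by $[e_1,e_2]$ for any basis $e_1,e_2$ of $H$, so the condition says precisely that this single bracket does not lie in $H$.

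\emph{Rank $1$.} Fix an orthonormal basis $u,v$ of $H$ and put $Z_0=[u,v]$. By the reformulation above $Z_0\notin H$, hence $Z_0\neq 0$ and $\mathfrak{g}'=\mathbb{R}Z_0$, so $[u,Z_0]=aZ_0$ and $[v,Z_0]=bZ_0$ for some $a,b\in\mathbb{R}$. Assuming $(a,b)\neq(0,0)$ (otherwise $\mathfrak{g}$ is the Heisenberg algebra and one takes $\alpha=\beta=0$), I would set $X=\cos\theta\,u+\sin\theta\,v$ and $Y=-\sin\theta\,u+\cos\theta\,v$ with $\theta$ chosen so that $[Y,Z_0]=0$; since a rotation acts trivially on the one-dimensional space $\bigwedge^2 H$ one still has $[X,Y]=Z_0$, and the same choice of $\theta$ yields $[X,Z_0]=\sqrt{a^2+b^2}\,Z_0$. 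Setting $Z=Z_0$ then gives \eqref{e.2.1} with $\alpha=0$ and $\beta=\sqrt{a^2+b^2}\ge 0$.

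\emph{Rank $2$.} Here $\mathfrak{g}'$ is a $2$-dimensional abelian ideal. A dimension count gives $\dim(H\cap\mathfrak{g}')\ge 1$, while $\dim(H\cap\mathfrak{g}')=2$ would force $H=\mathfrak{g}'$, hence $[H,H]=0$, contradicting H\"ormander's condition; so $\dim(H\cap\mathfrak{g}')=1$. I would then choose a unit vector $Y\in H\cap\mathfrak{g}'$, a unit vector $X\in H$ with $X\perp Y$, and set $Z=[X,Y]$. The reformulation of H\"ormander's condition gives $Z\notin H$, so $\{X,Y,Z\}$ is a basis of $\mathfrak{g}$ and $\{Y,Z\}$ a basis of $\mathfrak{g}'$. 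Since $\mathfrak{g}'$ is abelian, $[Y,Z]=0$, and since $[X,Z]\in\mathfrak{g}'$ we get $[X,Z]=\alpha Y+\beta Z$ for unique $\alpha,\beta\in\mathbb{R}$. This is \eqref{e.2.1} up to the sign of $\beta$: if $\beta<0$, replacing $(X,Y,Z)$ by $(-X,Y,-Z)$ keeps the horizontal vectors orthonormal, keeps $Z$ in $\mathfrak{g}'$, preserves the first two relations, and changes the last coefficient to $-\beta>0$.

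I expect essentially all the content to be in the rank $2$ case, and more precisely in the observation that solvability makes $\mathfrak{g}'$ abelian: this is exactly what makes $[Y,Z]=0$ automatic once $Y$ is taken inside $\mathfrak{g}'$, and what pins down $[X,Z]$ as a combination of $Y$ and $Z$. The rank $1$ analysis is a routine rotation computation, and in both cases the only point requiring care is arranging $\beta\ge 0$, which is achieved by the reflection above.
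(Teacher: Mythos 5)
Your proof is correct and follows the same overall strategy as the paper's: a case split on the rank of $\mathfrak{g}$, with the crux in each case being the production of a unit vector $Y\in H$ with $[Y,Z]=0$, followed by a sign flip to force $\beta\ge 0$. The two sub-arguments are, however, executed differently. In the rank-$1$ case the paper finds $Y$ abstractly, by intersecting the kernel of the bracket map $\bigwedge^2\mathfrak{g}\to\mathfrak{g}$ with the plane $\{\xi\wedge Z_0:\xi\in H\}$; you find it by an explicit rotation of a fixed orthonormal frame, which has the small advantages that the rotation can simultaneously be chosen to make $\beta=\sqrt{a^2+b^2}\ge 0$ and that $Z=[u,v]$ needs no rescaling. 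In the rank-$2$ case the paper proves that $\mathfrak{g}'$ is abelian by an elementary trace computation ($\operatorname{ad}_\xi$ is traceless for $\xi\in\mathfrak{g}'$), whereas you quote the classical theorem that the derived algebra of a solvable Lie algebra in characteristic zero is nilpotent, hence abelian in dimension $2$; this is a heavier input but perfectly valid over $\mathbb{R}$. Your reformulation of H\"ormander's condition as $[H,H]\not\subset H$ is a clean way to package the nondegeneracy facts ($Z_0\notin H$, $\dim(H\cap\mathfrak{g}')=1$) that the paper handles ad hoc. Two cosmetic points: of the two angles $\theta$ solving $[Y,Z_0]=0$ you must pick the one giving the positive coefficient in $[X,Z_0]$, and your closing remark that $\beta\ge0$ is ``achieved by the reflection'' really only applies to the rank-$2$ case as you have arranged things.
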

\begin{proof}
We divide the proof into cases according to the rank of $\mathfrak{g}$.  If the rank of $\mathfrak{g}$ is equal to 1, then first observe that $\mathfrak{g}'\cap H=\{0\}$, since otherwise we would have $[H,H]\subset H$. Pick a nonzero element $Z_0\in\mathfrak{g}'$.  In this case, the kernel of the Lie bracket $\ker{[\cdot,\cdot]}$ (viewed as a linear map $[\cdot,\cdot]:\bigwedge^2\mathfrak{g}\rightarrow \mathfrak{g}$) is 2 dimensional.  Define another 2 dimensional subspace $V_H\subset \bigwedge^2\mathfrak{g}$ to be the image of the linear map $\iota:H\rightarrow \bigwedge^2\mathfrak{g}$ defined by
\[\iota(\xi)=\xi\otimes Z_0-Z_0\otimes \xi.\]
The intersection of $V_H$ and $\ker{[\cdot,\cdot]}$ is nontrivial by purely dimensional considerations, so pick a unit vector $Y\in H$ such that $\iota(Y)\in \ker{[\cdot,\cdot]}$.  Pick another unit vector $X_0\in H$ which is orthogonal to $Y$.  Then $[X_0,Y]=\lambda Z_0$ for some nonzero $\lambda$.  We now set $Z_1=\lambda Z_0$ and we have the commutation relations
\[[X_0,Y]=Z_1\qquad [X_0,Z_1]=\beta Z_1\qquad [Y,Z_1]=0\]
for some real $\beta$.  If $\beta\geq 0$, then set $X=X_0$ and $Z=Z_1$.  If $\beta<0$, then set $X=-X_0$ and $Z=-Z_1$.  Either way, we get the commutation relations of (\ref{e.2.1}) with $\alpha=0$ and $\beta\geq 0$.

Now suppose the rank of $\mathfrak{g}$ is 2.  Let's write $\mathfrak{g}'$ as the span of two vectors $\{\xi_1,\xi_2\}$ and pick $\xi_3\in\mathfrak{g}\setminus\mathfrak{g}'$.  Now since $\xi_1\in\mathfrak{g}'=[\mathfrak{g},\mathfrak{g}]$, it can be written the commutator of two elements of $\mathfrak{g}$ and hence the adjoint operator $\operatorname{ad}_{\xi_1}:\mathfrak{g}\rightarrow\mathfrak{g}$ has trace zero.  Since $\operatorname{ad}_{\xi_1}\xi_1=0$ and $\operatorname{ad}_{\xi_1}\xi_3\in\mathfrak{g}'$, it follows that $\operatorname{ad}_{\xi_1}\xi_2=0$.  So when the rank of $\mathfrak{g}$ is 2, $\mathfrak{g}'$ is abelian.  This implies that $H$ and $\mathfrak{g}'$ cannot be equal, since $\mathfrak{g}'$ cannot generate $\mathfrak{g}$ under iterated Lie brackets.  Pick any unit vector $Y\in\mathfrak{g}'\cap H$, then choose another unit vector $X_0\in H$ which is orthogonal to $Y$.  Note that $[X_0,Y]\not\in \operatorname{span}\{Y\}$ since otherwise $H=\operatorname{span}\{X_0,Y\}$ would only generate $H$ under iterated Lie brackets.  Set $Z_0=[X_0,Y]\in \mathfrak{g}'$. Then $\operatorname{span}\{Y,Z_0\}=\mathfrak{g}'$, and since $[X_0,Z_0]\in\mathfrak{g}'$, it follows that $[X_0,Z_0]=\alpha Y+\beta Z$ for some $\alpha,\beta$ with $\alpha\neq 0$.   If $\beta\geq 0$, then set $X=X_0$ and $Z=Z_0$.  Otherwise, if $\beta<0$, then set $X=-X_0$ and $Z=-Z_0$.  We get the commutation relations of (\ref{e.2.1}) with $\alpha\neq 0$ and $\beta\geq 0$.
\end{proof}
The proposition above motivates the following definition.
\begin{definition}
A basis $\{X,Y,Z\}$ for $\mathfrak{g}$ is called a \textit{canonical basis for} $(\mathfrak{g},H,\langle\cdot,\cdot\rangle)$ \textit{with parameters } $\alpha$ \textit{and} $\beta\geq 0$ if $\{X,Y\}$ forms an orthonormal basis for $H$ and the following commutation relations hold
\[[X,Y]=Z\qquad [X,Z]=\alpha Y+\beta Z\qquad [Y,Z]=0.\]
\end{definition}

It is not clear from the statement or proof of Proposition \ref{p.2.1} that canonical bases (or the related parameters, for that matter) are unique for a given triple.  We prove that this is in fact the case in Proposition \ref{p.3390}.  We first specify our criteria for comparing triples.
\begin{definition}
The triples $(\mathfrak{g},H,\langle\cdot,\cdot\rangle)$ and $(\hat{\mathfrak{g}},\hat{H},\widehat{\langle\cdot,\cdot\rangle})$ are \emph{isomorphic} if there exists a Lie algebra isomorphism $\phi:\mathfrak{g}\rightarrow\hat{\mathfrak{g}}$ such that $\phi(H)=\hat{H}$ and $\phi|_H:H\rightarrow\hat{H}$ is an isometry.  The triples are \emph{almost isomorphic} if $\phi|_H:H\rightarrow\hat{H}$ is an isometry after a rescaling of the metric on $\hat{H}$.
\end{definition}

\begin{proposition}\label{p.3390}
Suppose $\{X,Y,Z\}$ is a canonical basis for $(\mathfrak{g},H,\langle\cdot,\cdot\rangle)$ with parameters $(\alpha,\beta)$ and $\{\hat{X},\hat{Y},\hat{Z}\}$ is canonical basis for $(\hat{\mathfrak{g}},\hat{H},\widehat{\langle\cdot,\cdot\rangle})$ with parameters $(\hat{\alpha},\hat{\beta})$.  Then $(\mathfrak{g},H,\langle\cdot,\cdot\rangle)$ and $(\hat{\mathfrak{g}},\hat{H},\widehat{\langle\cdot,\cdot\rangle})$ are almost isomorphic iff there exists a positive constant $C$ such that
\begin{equation}\label{e.2.2}
C\hat{\beta}=\beta\quad\text{and}\quad C^2\hat{\alpha}=\alpha.
\end{equation}
When Eq. (\ref{e.2.2}) holds, then $\phi|_{H}$ is an isometry after a rescaling of the metric on $\hat{H}$ by the constant $C^{-2}$.  In particular, $(\mathfrak{g},H,\langle\cdot,\cdot\rangle)$ and $(\hat{\mathfrak{g}},\hat{H},\widehat{\langle\cdot,\cdot\rangle})$ are isomorphic iff $\alpha=\hat{\alpha}$ and $\beta=\hat{\beta}$; that is, the parameters specify equivalence of triples up to isomorphism.
\end{proposition}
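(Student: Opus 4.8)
The statement combines two rather different facts: the ``if'' direction is handled by an explicit scaling isomorphism, while the ``only if'' direction, and with it the final ``in particular'' clause, rests on the assertion that the pair $(\alpha,\beta)$ is an invariant of the triple and does not depend on the choice of canonical basis. The plan is to isolate this last point as a lemma, prove it first, and then deduce the proposition; this lemma is also where the only genuine difficulty lies.

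\emph{Step 1: the parameters are intrinsic.} Suppose $\{X,Y,Z\}$ and $\{X',Y',Z'\}$ are canonical bases for the \emph{same} triple $(\mathfrak{g},H,\langle\cdot,\cdot\rangle)$, with parameters $(\alpha,\beta)$ and $(\alpha',\beta')$. Since $\{X,Y\}$ and $\{X',Y'\}$ are orthonormal bases of $H$, I would write $X'=pX+rY$, $Y'=qX+sY$ with $(p,r),(q,s)$ orthonormal, and compute $Z'=[X',Y']=(ps-qr)[X,Y]$, so $Z'=\pm Z$. The relation $[Y',Z']=0$ then forces $q\alpha Y+q\beta Z=0$; because $\{Y,Z\}$ is linearly independent (otherwise $[H,H]\subseteq H$ and $H$ could not generate $\mathfrak{g}$), this gives $q=0$ whenever $(\alpha,\beta)\neq(0,0)$, and orthonormality of the columns then forces $r=0$ and $p,s\in\{\pm1\}$. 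Evaluating $[X',Z']$ and comparing with $\alpha'Y'+\beta'Z'$ yields $\alpha'=\alpha$ together with $\beta=p\beta'$; since $\beta,\beta'\ge0$ this gives $\beta'=\beta$ in both sign cases. When $(\alpha,\beta)=(0,0)$ the algebra is Heisenberg and every canonical basis trivially has parameters $(0,0)$. (As a consistency check: $\operatorname{tr}(\operatorname{ad}_X)=\beta$ and $\operatorname{tr}(\operatorname{ad}_Y)=0$ in any canonical basis, so $\beta$ is the dual norm of the restriction to $H$ of the intrinsic trace form $\xi\mapsto\operatorname{tr}(\operatorname{ad}_\xi)$.) The hard part of the whole proof is precisely this step: the sign bookkeeping needed to respect the normalization $\beta\ge0$, and the simple but indispensable observation that $\{Y,Z\}$ is always an independent pair, which is what makes coefficient comparison legitimate.

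\emph{Step 2: the ``if'' direction.} Given $C>0$ with $C\hat\beta=\beta$ and $C^2\hat\alpha=\alpha$, I would define $\phi\colon\mathfrak{g}\to\hat{\mathfrak{g}}$ on a canonical basis by $\phi(X)=C\hat X$, $\phi(Y)=C\hat Y$, $\phi(Z)=C^2\hat Z$ and extend linearly; it is bijective, sending a basis to a basis. Checking the three brackets of \eqref{e.2.1} against $\phi$ applied to their right-hand sides reduces exactly to the two identities $C^2\hat\alpha=\alpha$ and $C\hat\beta=\beta$, so $\phi$ is a Lie algebra isomorphism with $\phi(H)=\hat H$; and since $\phi|_H$ carries $\{X,Y\}$ to $\{C\hat X,C\hat Y\}$, it is an isometry after rescaling $\widehat{\langle\cdot,\cdot\rangle}$ by the constant $C^{-2}$. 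Taking $C=1$ when $\alpha=\hat\alpha$ and $\beta=\hat\beta$ gives the backward half of the final assertion.

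\emph{Step 3: the ``only if'' direction.} Let $\phi\colon\mathfrak{g}\to\hat{\mathfrak{g}}$ be a Lie algebra isomorphism with $\phi(H)=\hat H$ and $\phi|_H$ an isometry after some rescaling of $\widehat{\langle\cdot,\cdot\rangle}$. Then $\phi|_H$ is conformal, so $\|\phi X\|=\|\phi Y\|=c$ and $\langle\phi X,\phi Y\rangle=0$ in $\widehat{\langle\cdot,\cdot\rangle}$ for some $c>0$, the rescaling factor being $c^{-2}$. Put $\hat X'=c^{-1}\phi X$, $\hat Y'=c^{-1}\phi Y$, $\hat Z'=c^{-2}\phi Z$; applying $\phi$ (a Lie algebra homomorphism) to \eqref{e.2.1} shows that $\{\hat X',\hat Y',\hat Z'\}$ is a canonical basis of $(\hat{\mathfrak{g}},\hat H,\widehat{\langle\cdot,\cdot\rangle})$ with parameters $(\alpha c^{-2},\beta c^{-1})$. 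By Step 1 these must coincide with $(\hat\alpha,\hat\beta)$, so $C:=c$ satisfies \eqref{e.2.2} and the rescaling is indeed by $C^{-2}$; and if $\phi|_H$ is already a genuine isometry then $c=1$, hence $\alpha=\hat\alpha$ and $\beta=\hat\beta$, which finishes the ``in particular'' clause.
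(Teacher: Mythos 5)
Your proof is correct, and its ``only if'' direction takes a genuinely different route from the paper's. The ``if'' direction (your Step 2) coincides with the paper's: the explicit scaling map $\phi(X)=C\hat X$, $\phi(Y)=C\hat Y$, $\phi(Z)=C^2\hat Z$. For the converse, the paper first notes that the rank is preserved and then argues by cases: in rank $1$ it matches $\beta$ with $\hat\beta$ using the dimension of the center, and in rank $2$ it uses faithfulness of the adjoint representation and equates the characteristic polynomials $-\lambda^3+\beta\lambda^2+\alpha\lambda$ and $-\lambda^3+C\hat\beta\lambda^2+C^2\hat\alpha\lambda$ of $\operatorname{ad}_X$ and $\operatorname{ad}_{\phi(X)}$, where $C$ is the $\hat X$-component of $\phi(X)$; notably that argument uses only that $\phi$ is a Lie algebra isomorphism with $\phi(H)=\hat H$ and never invokes the metric. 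You instead isolate the basis-independence of $(\alpha,\beta)$ as a preliminary lemma (your Step 1) --- exactly the uniqueness assertion the paper says Proposition \ref{p.3390} is meant to deliver, but which its proof leaves implicit --- and then transport a canonical basis through $\phi$, reading the new parameters off the conformal factor $c$ of $\phi|_H$. Your route is uniform in the rank, needs no characteristic polynomials, and makes explicit why the constant $C$ of Eq.~(\ref{e.2.2}) equals the conformal factor, which is precisely what the ``isometry after rescaling by $C^{-2}$'' clause and the final ``in particular'' statement require; the paper's route, in exchange, yields the slightly stronger fact that Eq.~(\ref{e.2.2}) already follows from any bracket-preserving bijection carrying $H$ to $\hat H$, with no conformality assumption. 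Both arguments are sound; the only blemishes in yours are cosmetic (``orthonormality of the columns'' should read ``rows'' for the coefficient matrix of $X',Y'$ in terms of $X,Y$, though for a $2\times2$ orthogonal matrix the two are equivalent).
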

\begin{proof}
Suppose Eq. (\ref{e.2.2}) holds for some $C>0$.  Consider the linear map $\phi:\mathfrak{g}\rightarrow\hat{\mathfrak{g}}$ determined by $\phi(X)=C\hat{X}$, $\phi(Y)=C\hat{Y}$, and $\phi(Z)=C^2\hat{Z}$.  Then $\phi$ is a Lie algebra homomorphism with $\phi(H)=\hat{H}$.
In this case, $\phi|_H$ is an isometry with respect to the scaled metric $C^{-2}\widehat{\langle\cdot,\cdot\rangle}$ on $\hat{H}$.

Now suppose there exists an isomorphism $\phi:\mathfrak{g}\rightarrow\hat{\mathfrak{g}}$ with $\phi(H)=\hat{H}$.   We first observe that the ranks of $\mathfrak{g}$ and $\hat{\mathfrak{g}}$ must be equal, since the derived subalgebra is a subalgebra.  In particular, this says that either $\alpha=\hat{\alpha}=0$ (rank 1 case) or $\alpha\neq 0$ and $\hat{\alpha}\neq 0$ (rank 2 case).

If $\alpha=\hat{\alpha}=0$, then we can set $C=\frac{\beta}{\hat{\beta}}$ provided $\hat{\beta}\neq 0$.  When $\alpha=\hat{\alpha}=0$ and $\hat{\beta}=0$, the center of $\hat{\mathfrak{g}}$, which is another subalgebra, has dimension $1$, and the fact that $\mathfrak{g}$ and $\hat{\mathfrak{g}}$ are isomorphic forces the dimension of the center of $\mathfrak{g}$ to be 1 also, i.e. $\beta=0$ as well.  In this case, any $C>0$ suffices for Eq. (\ref{e.2.2}).

If both $\alpha\neq 0$ and $\hat{\alpha}\neq 0$, then the rank of both $\mathfrak{g}$ and $\hat{\mathfrak{g}}$ is 2.  In this case, the adjoint representations are faithful.  Let $\pi:\mathfrak{g}\rightarrow M_3(\mathbb{C})$ and $\hat{\pi}:\hat{\mathfrak{g}}\rightarrow M_3(\mathbb{C})$ denote the adjoint representations written with respect to the bases $\{X,Y,Z\}$ and $\{\hat{X},\hat{Y},\hat{Z}\}$.  Note that the matrix representation of the transformation $\operatorname{ad}_{\phi(X)}$ in the basis $\{\phi(X),\phi(Y),\phi(Z)\}$ is equal to $\pi(X)$; in particular, $\pi(X)$ and $\hat{\pi}(\phi(X))$ have the same characteristic polynomial.
The characteristic polynomial of $\pi(X)$ is
\begin{equation}\label{e.2.3}
p_{\pi(X)}(\lambda)=-\lambda^3+\beta\lambda^2+\alpha\lambda.
\end{equation}
Now $\phi$ sends $X$ to an element of the form $C\hat{X}+a\hat{Y}+b\hat{Z}\in\hat{\mathfrak{g}}$, for some non-zero constant $C$ and some constants $a,b$.  $C$ cannot be zero, otherwise $\phi$ would take $X$ into $\hat{\mathfrak{g}}'$.  One can compute that
\begin{equation}\label{e.2.4}
p_{\hat{\pi}(C\hat{X}+a\hat{Y}+b\hat{Z})}(\lambda)=-\lambda^3+C\hat{\beta}\lambda^2+C^2\hat{\alpha}\lambda,
\end{equation}
which, since we have an equality of lines (\ref{e.2.3}) and (\ref{e.2.4}), implies the relations of ($\ref{e.2.2}$).
\end{proof}

We have therefore established that any triple $(\mathfrak{g},H,\langle\cdot, \cdot\rangle)$ is described uniquely (up to isomorphism) by the two parameters $\alpha$ and $\beta\geq 0$.  If $(\mathfrak{g},H,\langle\cdot,\cdot\rangle)$ and $(\hat{\mathfrak{g}},\hat{H},\widehat{\langle\cdot,\cdot\rangle})$ are triples with different parameters, then it may well be that $\mathfrak{g}$ is isomorphic as a Lie algebra to $\hat{\mathfrak{g}}$ even if the relations in Eq. (\ref{e.2.2}) are not satisfied.  However, in general, the isomorphism will not map $H$ to $\hat{H}$.  
\begin{remark}
One can show that the parameters differential invariants $\chi$ and $\kappa$ of \cite{AB} are related to $\alpha$ and $\beta$ by the formula
\[\chi=\frac{|\alpha|}{2}\qquad \kappa=-\beta^2-\frac{\alpha}{2}.\]
As shown in the next section, these constants have a natural geometric interpretation.
\end{remark}

\subsection{CR structure of the triple  $(\mathfrak{g},H,\langle\cdot,\cdot\rangle)$}\label{s.2.2}

We now give the geometric interpretation of the parameters $\alpha$ and $\beta$. We show that the parameters $\alpha$ and $\beta$ are related to the torsion and curvature of the Tanaka-Webster connection respectively.

Let $(\mathfrak{g},H,\langle\cdot,\cdot\rangle)$ denote a triple with parameters $(\alpha,\beta)$.  As described in the previous section, this means that $\mathfrak{g}=\operatorname{span}\{X,Y,Z\}$, $\{X,Y\}$ is an orthonormal basis for $H$, and
\[[X,Y]=Z\qquad [X,Z]=\alpha Y+\beta Z\qquad [Y,Z]=0.\]

The Lie group $G$ carries then a natural left invariant CR structure which is given by the complex subbundle generated by $X+iY$ (for details about CR manifolds, we refer to the book \cite{CR}). The left invariant 1-form $\theta=dZ$ is a contact form on $G$. Straightforward computations show that
\[
\mathcal{L}_X\theta =-dY,\quad \mathcal{L}_Y \theta =dX, \quad \mathcal{L}_Z \theta=\beta dX.
\]

As a consequence, the Reeb vector field of the contact form $\theta$ is
\[R=-\beta Y+Z.\]
We note, then, that we have the following commutation relations
\begin{equation}\label{e.2.5}
[X,Y]=\beta Y+R\qquad [X,R]=\alpha Y\qquad [Y,R]=0.
\end{equation}
Note that $Y$ and $R$ span $\mathfrak{g}'$.

These commutation relations make easy to compute the Christoffel's symbols of the Tanaka-Webster connection $\nabla$:
\[
\nabla_X X=0, \quad\nabla_X Y =\frac{1}{2} \beta Y ,\quad \nabla_X R=0,
\]
\[
\nabla_Y X =-\beta Y,\quad \nabla_Y Y=0,\quad \nabla_Y R=0,
\]
\[
\nabla_R X=-\frac{1}{2} \alpha Y,\quad \nabla_R Y=\frac{1}{2} \alpha X,\quad \nabla_R R=0.
\]
In particular, one computes that the pseudo Hermitian torsion $\tau$ of $\nabla$ is the horizontal endomorphism characterized by
\[
\tau(X) =\frac{1}{2} \alpha Y,\quad  \tau(Y)=\frac{1}{2} \alpha X.
\]
As a conclusion, $\alpha$ is the torsion parameter of the canonical CR structure on  the triple $(\mathfrak{g},H,\langle\cdot,\cdot\rangle)$.  The parameter $\beta$ turns out to be a curvature parameter. Indeed, if $\mathbf{Ric}$ denotes the Ricci curvature tensor of the connection $\nabla$, then one computes that for $V \in H$,
\[
\mathbf{Ric}(V,V)=-\left(\beta^2+\frac{\alpha}{2}\right)  \| V \|^2.
\]

If we consider the left-invariant Haar measure determined by the volume form $\theta\wedge d\theta= dX\wedge dY\wedge dR$, then we have
\[X^*=-X+\beta\qquad Y^*=-Y\]
It follows that the CR   sub-Laplacian is the left-invariant subelliptic operator
\begin{equation}\label{e.1.2.2}
L=-X^*X-Y^*Y=X^2+Y^2-\beta X.
\end{equation}
\section{Examples and Representations}\label{s.3}

In this section, we give real representations of 3-dimensional solvable Lie algebras determined by the parameters $(\alpha,\beta)$ and their associated solvable Lie groups.  In many ways, this analysis follows the typical arguments made in classifying three dimensional Lie algebras (see \cite{FH,J} for example).  However, our analysis differs in that the Lie algebra elements $X$ and $Y$ we exhibit cannot be scaled independently due to the requirement that they form an orthonormal basis for $H$; we view the triple $(\mathfrak{g},H,\langle\cdot,\cdot\rangle)$ as given and look for representations which are best adapted to this structure.

Recall from Section \ref{s.2.2} that all our Lie algebras can be written as $\mathfrak{g}=\operatorname{span}\{X,Y,R\}$ and satisfy
\begin{equation}\label{e.2.9}
[X,Y]=\beta Y+R\qquad [X,R]=\alpha Y\qquad [Y,R]=0.
\end{equation}
When $\alpha\neq 0$, or equivalently the rank of $\mathfrak{g}$ is 2, $\mathfrak{g}$ is centerless and hence the adjoint representation is faithful.  In this case, the adjoint representation of $\mathfrak{g}$ in the basis $\{Y,R,X\}$ is given by
\begin{equation}\label{e.5544} X=\left(\begin{array}{ccc} \beta & \alpha & 0\\ 1 & 0 & 0\\ 0 & 0 & 0\end{array}\right)\qquad Y=\left(\begin{array}{ccc} 0 & 0 & -\beta\\ 0 & 0 & -1\\ 0 & 0 & 0\end{array}\right)\qquad R=\left(\begin{array}{ccc} 0 & 0 & -\alpha\\ 0 & 0 & 0\\ 0 & 0 & 0\end{array}\right)\end{equation}
If $\{\xi_1,\xi_2\}$ is another basis for $\mathfrak{g}'$, then the adjoint representation in the basis $\{\xi_1,\xi_2,X\}$ will take the partitioned form
\begin{equation}
X=\left(\begin{array}{cc} A & \overline{0}\\\overline{0}^T & 0\end{array}\right)\qquad Y=\left(\begin{array}{cc} \mathbf{0} & \overline{y}\\ \overline{0}^T & 0\end{array}\right)\qquad R=\left(\begin{array}{cc} \mathbf{0} & \overline{r}\\ \overline{0}^T & 0\end{array}\right)\label{e.2.10}
\end{equation}
where $A$ denotes the transformation $\operatorname{ad}_X:\mathfrak{g}'\rightarrow\mathfrak{g}'$ in the basis $\{\xi_1,\xi_2\}$, $\overline{y}$ and $\overline{r}$ are some vectors in $\mathbb{R}^2$ which depend on the basis $\{\xi_1,\xi_2\}$, and
\[\mathbf{0}=\left(\begin{array}{cc} 0 & 0\\ 0 & 0\end{array}\right)\qquad \overline{0}=\left(\begin{array}{c} 0\\ 0\end{array}\right).\]
Thus, we have a family of faithful representations of $\mathfrak{g}$ determined by the choice of basis $\{\xi_1,\xi_2\}$.  For a given $(\alpha,\beta)$, certain choices of bases will give rise to more `natural' coordinates when exponentiated.  Note also that if $Y$ and $R$ are scaled by the same nonzero number, the Lie algebra relations are maintained.  This scaling will also be used to simplify the resulting adjoint representations.  We will see that even in the rank 1 case, we have a faithful representation of $\mathfrak{g}$ of the form of Eq. (\ref{e.2.10}).

It is easy to check that exponentials of matrices of the form of Eq. (\ref{e.2.10}) are given by
\[\exp{(sX)}=\left(\begin{array}{cc} \exp{(sA)} & \overline{0}\\\overline{0}^T & 1\end{array}\right)\quad \exp{(sY)}=\left(\begin{array}{cc} I& s\overline{y}\\ \overline{0}^T & 1\end{array}\right)\quad \exp{(sR)}=\left(\begin{array}{cc} I & s\overline{r}\\ \overline{0}^T & 1\end{array}\right).\]
In which case, we have natural coordinates $(\theta, x,y)$ on the the Lie group $G$ associated to the Lie algebra representation in Eq. (\ref{e.2.10}).  These coordinates are given by
\begin{equation}
G=\left\{\left.\left(\begin{array}{cc} \exp(\theta A) & \overline{x}\\ \overline{0}^T & 1\end{array}\right)\right|\overline{x}=(x,y)^T\text{ with }\theta ,x,y\in\mathbb{R}\right\}.\label{e.2.11}
\end{equation}
Note that Eq. (\ref{e.5544}) implies that $G$ is unimodular iif $\beta=0$.  In these coordinates on the group $G$, elements of $\mathfrak{g}$, considered as left-invariant vector fields on $G$, are given by the differential operators
\begin{equation}\tilde{X}=\frac{\partial}{\partial \theta}\qquad
\tilde{Y}=\overline{\nabla} \left(\exp{(\theta A)}\overline{y}\right)\qquad
\tilde{R}=\overline{\nabla} \left(\exp{(\theta A)}\overline{r}\right)\label{e.2.12}\end{equation}
where $\overline{\nabla}$ is the row vector valued operator
\begin{equation}\label{e.2.15}
\overline{\nabla}=(\frac{\partial}{\partial x},\frac{\partial}{\partial y}).
\end{equation}
Note that the coefficients of $\frac{\partial}{\partial x}$ and $\frac{\partial}{\partial y}$ in the vector fields given in Eq. (\ref{e.2.12}) only depend on $\theta$.  In the sections that follow, these computations are carried out for the various regimes of the parameters $(\alpha,\beta)$.

\subsection{Rank 1 $(\alpha=0)$}\label{s.3.1}
When $\alpha=0$, $\mathfrak{g}$ as given by Eq. (\ref{e.2.9}) has a nontrivial center and so the adjoint representation of $\mathfrak{g}$ is not faithful.  We nonetheless have representations of the form of Eq. (\ref{e.2.10}).

\subsubsection{$\beta=0$}\label{s.3.1.1}
When $\beta=0$, $\mathfrak{g}$ is the well-known Heisenberg Lie algebra
\[[X,Y]=R\qquad [X,R]=0\qquad [Y,R]=0.\]
The Heisenberg Lie algebra has the representation
\[X=\left(\begin{array}{ccc} 0 & 1 & 0\\ 0 & 0 & 0\\ 0 & 0 & 0\end{array}\right)\qquad Y=\left(\begin{array}{ccc} 0 & 0 & 0\\ 0 & 0 & 1\\ 0 & 0 & 0\end{array}\right)\qquad R=\left(\begin{array}{ccc} 0 & 0 & 1\\ 0 & 0 & 0\\ 0 & 0 & 0\end{array}\right),\]
while the Heisenberg Lie group is the group of $3\times 3$ upper triangular matrices
\[G=\left\{\left.\left(\begin{array}{ccc} 1 & \theta & x\\ 0 & 1 & y\\ 0 & 0 & 1\end{array}\right)\right|\theta,x,y\in\mathbb{R}\right\}.\]
In the coordinates $(\theta,x,y)$, elements of $\mathfrak{g}$ correspond to the left-invariant differential operators
\[X=\frac{\partial}{\partial \theta}\qquad Y=\theta\frac{\partial}{\partial x}+\frac{\partial}{\partial y}\qquad R=\frac{\partial}{\partial x}\]
and the sub-Laplacian takes the form
\[L=\frac{\partial^2}{\partial \theta^2}+\left(\theta \frac{\partial}{\partial x}+\frac{\partial}{\partial y}\right)^2.\]

\subsubsection{$\beta\neq 0$}\label{s.3.1.2}
When $\beta\neq 0$, we have the relations
\[[X,Y]=\beta Y+R\qquad [X,R]=0\qquad [Y,R]=0.\]
We have a faithful representation for $\mathfrak{g}$ given by
\[X=\left(\begin{array}{ccc} \beta & 0 & 0\\ 0 & 0 & 0\\ 0 & 0 & 0\end{array}\right)\qquad Y=\left(\begin{array}{ccc} 0 & 0 & 1\\ 0 & 0 & 1\\ 0 & 0 & 0\end{array}\right)\qquad R=\left(\begin{array}{ccc} 0 & 0 & 0\\ 0 & 0 & -\beta\\ 0 & 0 & 0\end{array}\right),\]
which is the Lie algebra of the simply-connected group
\[G=\left\{\left.\left(\begin{array}{ccc} e^{\beta\theta} & 0 & x\\ 0 & 1 & y\\ 0 & 0 & 1\end{array}\right)\right|\theta,x,y\in\mathbb{R}\right\}.\]
In the coordinates $(\theta,x,y)$, elements of $\mathfrak{g}$ correspond to the left-invariant differential operators
\[X=\frac{\partial}{\partial \theta}\qquad Y=e^{\beta\theta}\frac{\partial}{\partial x}+\frac{\partial}{\partial y}\qquad R=-\beta\frac{\partial}{\partial y}\]
and the sub-Laplacian
\[L=\frac{\partial^2}{\partial \theta^2}-\beta\frac{\partial}{\partial \theta}+\left(e^{\beta\theta}\frac{\partial}{\partial x}+\frac{\partial}{\partial y}\right)^2.\]

When $\beta=1$, if we set $\xi_1=Y+R,\xi_2=-X$, and $\xi_3=-R$, then we have the relations
\[[\xi_1,\xi_2]=\xi_1\qquad[\xi_1,\xi_3]=0\qquad [\xi_2,\xi_3]=0\]
which is the Lie algebra of $A^+(\mathbb{R})\oplus\mathbb{R}$, the affine maps on the plane which act as orientation preserving on one axis and translations on the other axis.

\subsection{Rank 2 $(\alpha\neq 0)$}
When $\alpha\neq 0$, the operator $\operatorname{ad}_X|_{\mathfrak{g}'}$ is invertible, hence we arrive at representations of these Lie algebras based on the (real) Jordan normal form of $\operatorname{ad}_X|_{\mathfrak{g}'}$.  Note that the characteristic polynomial of $\operatorname{ad}_X|_{\mathfrak{g}'}$ is
\[p(\lambda)=\left|\begin{array}{cc} \beta-\lambda & \alpha\\1 & -\lambda\end{array}\right|=\lambda^2-\beta\lambda-\alpha.\]
We consider the following cases based on the discriminant of $p$, $\delta=\beta^2+4\alpha$.

\subsubsection{$\delta=\beta^2+4\alpha>0$}
In this case, $\operatorname{ad}_X|_{\mathfrak{g}'}$ has two distinct real eigenvalues, $\lambda_1=\frac{1}{2}(\beta + \sqrt{\delta})$ and $\lambda_2=\frac{1}{2}(\beta - \sqrt{\delta})$.  The basis $\{\xi_1,\xi_2\}$, where $\xi_1=\lambda_1 Y+R$ and $\xi_2=\lambda_2 Y+R$, diagonalizes $\operatorname{ad}_X|_{\mathfrak{g}'}$.  If we compute the adjoint representation of $\mathfrak{g}$ in the basis $\{\xi_1,\xi_2,X\}$ and then scale $Y$ and $R$ by $\sqrt{\delta}$ we obtain
\[X=\left(\begin{array}{ccc} \lambda_1 & 0 & 0\\ 0 & \lambda_2 & 0\\ 0 & 0 & 0\end{array}\right)\qquad Y=\left(\begin{array}{ccc} 0 & 0 & -\lambda_1\\ 0 & 0 & \lambda_2\\ 0 & 0 & 0\end{array}\right)\qquad R=\left(\begin{array}{ccc} 0 & 0 & -\alpha\\ 0 & 0 & \alpha\\ 0 & 0 & 0\end{array}\right),\]
which is the Lie algebra of the simply-connected matrix group
\[G=\left\{\left.\left(\begin{array}{ccc} e^{\lambda_1\theta} & 0 & x\\ 0 & e^{\lambda_2\theta} & y\\ 0 & 0 & 1\end{array}\right)\right|\theta,x,y\in\mathbb{R}\right\}.\]
In the coordinates $(\theta,x,y)$, elements of $\mathfrak{g}$ correspond to the left-invariant differential operators
\[X=\frac{\partial}{\partial \theta}\qquad Y=-\lambda_1 e^{\lambda_1\theta}\frac{\partial}{\partial x}+\lambda_2 e^{\lambda_2\theta}\frac{\partial}{\partial y}\qquad R=-\alpha e^{\lambda_1\theta}\frac{\partial}{\partial x}+\alpha e^{\lambda_2\theta}\frac{\partial}{\partial y}\]
and the sub-Laplacian
\[L=\frac{\partial^2}{\partial \theta^2}-\beta\frac{\partial}{\partial\theta}+\left(-\lambda_1 e^{\lambda_1\theta}\frac{\partial}{\partial x}+\lambda_2 e^{\lambda_2\theta}\frac{\partial}{\partial y}\right)^2.\]

When $\alpha=1$ and $\beta=0$, we have $\lambda_1=1,$ and $\lambda_2=-1$.  In this case, we have there relations
\[[X,Y]=R\qquad [X,R]=Y\qquad [Y,R]=0.\]
This is the Lie algebra $\mathfrak{solv}^-$.

\subsubsection{$\delta=\beta^2+4\alpha<0$}\label{s.4499}
In this case, $\operatorname{ad}_X|_{\mathfrak{g}'}$ has two complex conjugate eigenvalues, $\frac{1}{2}(\beta +i\sqrt{-\delta})$ and $\frac{1}{2}(\beta -i\sqrt{-\delta})$.  For simplicity, set $\rho=\frac{\beta}{2}$, $\omega=\frac{\sqrt{-\delta}}{2}$, $\xi_1=\rho Y+R$ and $\xi_2=-\omega Y$.  If we compute the adjoint representation of $\mathfrak{g}$ in the basis $\{\xi_1,\xi_2,X\}$ and then scale $Y$ and $R$ by $\omega$ we obtain
\[X=\left(\begin{array}{ccc} \rho & -\omega & 0\\ \omega & \rho & 0\\ 0 & 0 & 0\end{array}\right)\qquad Y=\left(\begin{array}{ccc} 0 & 0 & -\omega\\ 0 & 0 & \rho\\ 0 & 0 & 0\end{array}\right)\qquad R=\left(\begin{array}{ccc} 0 & 0 & 0\\ 0 & 0 & -(\rho^2+\omega^2)\\ 0 & 0 & 0\end{array}\right),\]
which is the Lie algebra of the matrix group
\[G=\left\{\left.\left(\begin{array}{ccc} e^{\rho\theta}\cos{(\omega\theta)} & -e^{\rho\theta}\sin{(\omega\theta)} & x\\ e^{\rho\theta}\sin{(\omega\theta)} & e^{\rho\theta}\cos{(\omega\theta)} & y\\ 0 & 0 & 1\end{array}\right)\right|\theta,x,y\in\mathbb{R}\right\}.\]
In the coordinates $(\theta,x,y)$, elements of $\mathfrak{g}$ correspond to the left-invariant differential operators
\[X=\frac{\partial}{\partial \theta}\qquad R=(\rho^2+\omega^2)e^{\rho\theta}\sin{(\omega\theta)}\frac{\partial}{\partial x}-(\rho^2+\omega^2)e^{\rho\theta}\cos{(\omega\theta)}\frac{\partial}{\partial y}\]
and
\begin{align*}
Y&=\left(-\omega e^{\rho\theta}\cos{(\omega\theta)}-\rho e^{\rho\theta}\sin{(\omega\theta)}\right)\frac{\partial}{\partial x}+\left(-\omega e^{\rho\theta}\sin{(\omega\theta)}+\rho e^{\rho\theta}\cos{(\omega\theta)}\right)\frac{\partial}{\partial y},\end{align*}
as well as the sub-Laplacian
\begin{align*}
L&=\frac{\partial^2}{\partial\theta^2}-\beta\frac{\partial}{\partial\theta}\\&+e^{\beta\theta}\left(\left(-\omega \cos{(\omega\theta)}-\rho \sin{(\omega\theta)}\right)\frac{\partial}{\partial x}+\left(-\omega \sin{(\omega\theta)}+\rho \cos{(\omega\theta)}\right)\frac{\partial}{\partial y}\right)^2.
\end{align*}
Note that $\rho^2+\omega^2=-\alpha$.

When $\alpha=-1$ and $\beta=0$, we have $\delta=-4$, $\rho=0$, and $\omega=1$.  In this case, we have there relations
\[[X,Y]=R\qquad [X,R]=-Y\qquad [Y,R]=0.\]
This is the Lie algebra $\mathfrak{se}(2)$, the Lie algebra of the Euclidean motions of the plane.  Note that for these parameters
\[L=\frac{\partial^2}{\partial\theta^2}+\left(\cos{(\theta)}\frac{\partial}{\partial x}+\sin{(\theta)}\frac{\partial}{\partial y}\right)^2.\]
This case is elaborated on in Section \ref{s.4.2} where we use the above form of $L$ to obtain a representation of the heat kernel using the Fourier transform.

\subsubsection{$\delta=\beta^2+4\alpha=0$}
In this case, $\operatorname{ad}_X|_{\mathfrak{g}'}$ has one real real eigenvalue $\lambda=\frac{\beta}{2}$ but is not diagonalizable.  
If we set $\xi_1=\lambda Y+R$ and $\xi_2=(1+\lambda)Y+R$, then the adjoint representation of $\mathfrak{g}$ in the basis $\{\xi_1,\xi_2,X\}$ can be written as
\[X=\left(\begin{array}{ccc} \lambda & 1 & 0\\ 0 & \lambda & 0\\ 0 & 0 & 0\end{array}\right)\qquad Y=\left(\begin{array}{ccc} 0 & 0 & \lambda-1\\ 0 & 0 & -\lambda\\ 0 & 0 & 0\end{array}\right)\qquad R=\left(\begin{array}{ccc} 0 & 0 & -\lambda^2\\ 0 & 0 & \lambda^2\\ 0 & 0 & 0\end{array}\right)\]
which is the Lie algebra of the matrix group
\[G=\left\{\left.\left(\begin{array}{ccc} e^{\lambda\theta} & \theta e^{\lambda\theta} & x\\ 0 & e^{\lambda\theta} & y\\ 0 & 0 & 1\end{array}\right)\right|\theta,x,y\in\mathbb{R}\right\}.\]
In the coordinates $(\theta,x,y)$, elements of $\mathfrak{g}$ correspond to the left-invariant differential operators
\[X=\frac{\partial}{\partial \theta}\qquad Y=e^{\lambda\theta}\left(\lambda-1-\lambda\theta\right)\frac{\partial}{\partial x}-\lambda e^{\lambda\theta}\frac{\partial}{\partial y}\]
and
\[R=\lambda^2e^{\lambda\theta}\left(\theta-1\right)\frac{\partial}{\partial x}+\lambda^2 e^{\lambda\theta}\frac{\partial}{\partial y},\]
as well as the sub-Laplacian
\[L=\frac{\partial^2}{\partial\theta^2}-\beta\frac{\partial}{\partial\theta}+e^{\beta\theta}\left(\left(\lambda-1-\lambda\theta\right)\frac{\partial}{\partial x}-\lambda\frac{\partial}{\partial y}\right)^2.\]

\section{The Subelliptic Heat Kernel and Heat Semigroup}\label{s.4}

Let $(\mathfrak{g},H,\langle\cdot,\cdot\rangle)$ denote a triple with parameters $(\alpha,\beta)$ and $G$ a 3-dimensional solvable Lie group with Lie algebra $\mathfrak{g}$.  Let $L$ denote the second-order left-invariant elliptic operator
\[L=X^2+Y^2-\beta X.\]
As described in section \ref{s.2.2}, $L$ is the canonical sub-Laplacian on $G$.  $L$ is a symmetric non-positive operator with respect to the left-invariant Haar measure $\mu$ determined by the volume form $dX\wedge dY\wedge dR$.  We will let $P_t$ denote the semigroup generated by $L$ and $p_t$ the corresponding integral heat kernel so that, for any $g\in G$,
\[(P_tf)(g)=\int_G f(h)p_t(h^{-1}g)~d\mu(h).\]
In the following subsection, we give probabilistic representations of the heat kernel using the coordinates arising form the representations given in Section \ref{s.3}.  We then briefly comment on analytic representations of the Fourier transform of the heat kernel.  

\subsection{Probabilistic Representations of $p_t$}\label{s.4.1}
As described in Section \ref{s.3}, using the coordinates $(\theta ,x,y)$ on the group $G$ described in Eq. (\ref{e.2.11}), one can identify elements of $\mathfrak{g}$ with the left-invariant operators
\begin{equation*}X=\frac{\partial}{\partial \theta}\qquad
Y=\overline{\nabla} \left(\exp{(\theta A)}\overline{y}\right)\qquad
R=\overline{\nabla}\left(\exp{(\theta A)}\overline{r}\right)\end{equation*}

For a fixed $g=(\theta,x,y)\in G$, let $Z^g(\cdot)$ denote the $G$-valued process which solves
\begin{equation}dZ^g(t)=X(Z^g(t))dB(t)+Y(Z^g(t))dW(t)-\beta X(Z^g(t))dt\qquad Z^g(0)=g\label{e.340}
\end{equation}
where $B(\cdot)$ and $W(\cdot)$ are independent standard Brownian motions.  Then $Z^g$ has generator $L$ and hence the density of the endpoint distribution of $Z^g$ is equal to the heat kernel:
\[\mathbb{P}(Z^g(t)\in A)=\int_A p_t(h^{-1}g)~d\mu(h).\]
Since $Z^g$ is left-invariant, it suffices to examine the heat kernel for one particular choice of $g$.  When $g=e=(0,0,0)$, it is not difficult to see that, in the coordinates $(\theta,x,y)$, $Z^e(t)=Z(t)$ takes the form
\begin{equation}
Z(t)
=\left(B(t)-\beta t,\int_0^t \exp{((B(s)-\beta s)A)}\overline{y}~dW(s)\right)\label{e.341}
\end{equation}
Note that the second component in Eq. (\ref{e.341}) is a vector; we are considering
\[Z_A(t):=\int_0^t \exp{((B(s)-\beta s)A)}\overline{y}~dW(s)\]
 as an $\mathbb{R}^2$-valued process.  For fixed $s\rightarrow B(s)$, $Z_A(t)$ is a mean $(0,0)$ Gaussian random variable with covariance matrix
\begin{equation}\label{e.4.3}
\Sigma_t=\int_0^t \exp((B(s)-\beta s)A)\overline{y}\overline{y}^T\exp((B(s)-\beta s)A^T)~ds.
\end{equation}
It follows that we can represent the density of the endpoint distribution of $Z(t)$ by
\begin{equation}\label{e.4.4}
p_{t}(\theta,x,y)=\frac{1}{(2\pi )^{3/2}\sqrt{t}}e^{-\frac{\theta^2}{2t}}\mathbb{E}\left[\frac{1}{\sqrt{|\Sigma_t|}}\exp{\left(-\frac{1}{2}\left(\begin{array}{c} x\\y\end{array}\right)^T\Sigma^{-1}_t\left(\begin{array}{c} x\\y\end{array}\right)\right)}\left|\right.  B_{t}=\theta\right]
\end{equation}
where $|\Sigma_t|$ denotes the determinant of $\Sigma_t$.  In the examples below, we compute this representation for the various regimes of $(\alpha,\beta)$ described in Section \ref{s.3}.
\begin{example}[$\alpha=\beta=0$]
In this case, for fixed $s\rightarrow B(s)$, we have covariance matrix
\[\Sigma_t=\left(\begin{array}{cc} \int_0^t B(s)^2~ds & \int_0^t B(s)~ds\\ \int_0^t B(s)~ds & t\end{array}\right),\]
which has determinant
\[|\Sigma_t|=t\int_0^t B(s)^2~ds-\left(\int_0^t B(s)~ds\right)^2.\]
It follows that
\[p_{t}(\theta,x,y)=\frac{1}{(2\pi )^{3/2}\sqrt{t}}e^{-\frac{\theta^2}{2t}}\mathbb{E}\left[\frac{1}{\sqrt{|\Sigma_t|}}\exp{\left(-\frac{1}{2|\Sigma_t|}\int_0^t\left(x-yB(s)\right)^2ds\right)}\left|\right.  B_{t}=\theta\right].\]
\end{example}
\begin{example}[$\alpha=0,\beta\neq 0$]
In this case for fixed $s\rightarrow B(s)$, we have covariance matrix
\[\Sigma_t=\left(\begin{array}{cc} \int_0^t e^{2\beta (B(s)-\beta s)}~ds & \int_0^t e^{\beta (B(s)-\beta s)}~ds\\ \int_0^t e^{\beta (B(s)-\beta s)}~ds & t\end{array}\right),\]
which has determinant
\[|\Sigma_t|=t\int_0^t e^{2\beta (B(s)-\beta s)}~ds-\left(\int_0^t e^{\beta (B(s)-\beta s)}~ds\right)^2.\]
It follows that
\[p_{t}(\theta,x,y)=\frac{1}{(2\pi )^{3/2}\sqrt{t}}e^{-\frac{\theta^2}{2t}}\mathbb{E}\left[\frac{1}{\sqrt{|\Sigma_t|}}\exp{\left(-\frac{1}{2 |\Sigma_t|}\int_0^t\left(x-ye^{\beta (B(s)-\beta s)}\right)^2ds\right)}\left|\right.  B_{t}=\theta\right].\]
\end{example}

\begin{example}[$\alpha\neq 0, \delta=\beta^2+4\alpha>0$]
Recall that we set $\lambda_1=\frac{1}{2}(\beta+\sqrt{\delta})$ and $\lambda_1=\frac{1}{2}(\beta-\sqrt{\delta})$.  In this case for fixed $s\rightarrow B(s)$, we have covariance matrix
\[\Sigma_t=\left(\begin{array}{cc} \lambda_1^2\int_0^t e^{2\lambda_1(B(s)-\beta s)}~ds & -\lambda_1\lambda_2\int_0^t e^{(\lambda_1+\lambda_2) (B(s)-\beta s)}~ds\\ -\lambda_1\lambda_2\int_0^t e^{(\lambda_1+\lambda_2)(B(s)-\beta s)}~ds & \lambda_2^2\int_0^t e^{2\lambda_2(B(s)-\beta s)}~ds\end{array}\right),\]
which has determinant
\[|\Sigma_t|=\lambda_1^2\lambda_2^2\left(\left(\int_0^t e^{2\lambda_1 (B(s)-\beta s)}~ds\right)\left(\int_0^t e^{2\lambda_2 (B(s)-\beta s)}~ds\right)-\left(\int_0^t e^{(\lambda_1+\lambda_2)(B(s)-\beta s)}~ds\right)^2\right).\]
It follows that
\begin{align*}
p_{t}(\theta,x,y)&=\frac{1}{(2\pi )^{3/2}\sqrt{t}}e^{-\frac{\theta^2}{2t}}\\ &\times \mathbb{E}\left[\frac{1}{\sqrt{|\Sigma_t|}}\exp{\left(-\frac{1}{2 |\Sigma_t|}\int_0^t\left(\lambda_2xe^{\lambda_2 (B(s)-\beta s)}+\lambda_1ye^{\lambda_1 (B(s)-\beta s)}\right)^2ds\right)}\left|\right.  B_{t}=\theta\right].
\end{align*}
\end{example}

\begin{example}[$\alpha\neq 0, \delta=\beta^2+4\alpha<0$]
Recall that we set $\rho=\frac{\beta}{2}$ and $\omega=\frac{\sqrt{-\delta}}{2}$ and that $\rho^2+\omega^2=-\alpha$.  In this case for fixed $s\rightarrow B(s)$, we have the covariance matrix with entries
\[(\Sigma_t)_{11}=-\alpha\int_0^t e^{2\rho(B(s)-\beta s)}\cos^2{(\omega(B(s)-\beta s)-\theta_0)}~ds,\]
\[(\Sigma_t)_{21}=(\Sigma_t)_{12}=-\alpha\int_0^t e^{2\rho(B(s)-\beta s)}\sin{(\omega(B(s)-\beta s)-\theta_0)}\cos{(\omega(B(s)-\beta s)-\theta_0)}~ds\]
and
\[(\Sigma_t)_{22}=-\alpha\int_0^t e^{2\rho(B(s)-\beta s)}\sin^2{(\omega(B(s)-\beta s)-\theta_0)}~ds,\]
where $0\leq \theta_0<\frac{\pi}{2}$ is the angle satisfying $\tan{\theta_0}=\frac{\rho}{\omega}$.  The covariance matrix has determinant
\begin{align*}
|\Sigma_t|&=\alpha^2\left(\int_0^t e^{2\rho (B(s)-\beta s)}\sin^2{(\omega(B(s)-\beta s)-\theta_0)}~ds\right)\\&\times \left( \int_0^t e^{2\rho (B(s)-\beta s)}\cos^2{(\omega(B(s)-\beta s)-\theta_0)}~ds\right)\\&-\alpha^2\left( \int_0^t e^{2\rho (B(s)-\beta s)}\sin{(\omega(B(s)-\beta s)-\theta_0)}\cos{(\omega(B(s)-\beta s)-\theta_0)}~ds\right)^2.
\end{align*}
It follows that
\begin{align*}
p_{t}(\theta,x,y)&=\frac{1}{(2\pi )^{3/2}\sqrt{t}}e^{-\frac{\theta^2}{2t}}\mathbb{E}\left[\frac{1}{\sqrt{|\Sigma_t|}}\exp{\left(\frac{\alpha}{2 |\Sigma_t|}I_{\rho,\omega}(B,x,y)\right)}\left|\right.  B_{t}=\theta\right],
\end{align*}
where
\begin{align*}
&I_{\rho,\omega}(B,x,y)\\&=\int_0^t\left(xe^{\rho(B(s)-\beta s)}\sin{(\omega(B(s)-\beta s)-\theta_0)}-ye^{\rho (B(s)-\beta s)}\cos{(\omega(B(s)-\beta s)-\theta_0)}\right)^2ds.
\end{align*}
\end{example}

\begin{example}[$\alpha\neq 0, \delta=\beta^2+4\alpha=0$]
Recall that we set $\lambda=\frac{\beta}{2}$.  In this case for fixed $s\rightarrow B(s)$, we have covariance matrix with entries
\[(\Sigma_t)_{11}=\int_0^t e^{2\lambda(B(s)-\beta s)}(\lambda-1-\lambda(B(s)-\beta s))^2~ds,\]
\[(\Sigma_t)_{21}=(\Sigma_t)_{12}=-\lambda\int_0^t e^{2\lambda(B(s)-\beta s)}(\lambda-1-\lambda(B(s)-\beta s))~ds\]
and
\[(\Sigma_t)_{22}=\lambda^2\int_0^t e^{2\lambda(B(s)-\beta s)}~ds,\]
which has determinant
\begin{align*}
|\Sigma_t|&=\lambda^2\left(\int_0^t e^{2\lambda(B(s)-\beta s)}~ds\right)\left(\int_0^t e^{2\lambda(B(s)-\beta s)}(\lambda-1-\lambda(B(s)-\beta s))^2~ds\right)\\ &-\lambda^2\left(\int_0^t e^{2\lambda(B(s)-\beta s)}(\lambda-1-\lambda(B(s)-\beta s))~ds\right)^2.
\end{align*}
It follows that
\begin{align*}
p_{t}(\theta,x,y)&=\frac{1}{(2\pi )^{3/2}\sqrt{t}}e^{-\frac{\theta^2}{2t}}\mathbb{E}\left[\frac{1}{\sqrt{|\Sigma_t|}}\exp{\left(-\frac{1}{2 |\Sigma_t|}I_\lambda(B,x,y)\right)}\left|\right.  B_{t}=\theta\right],
\end{align*}
where
\[I_\lambda(B,x,y)=\int_0^te^{2\lambda(B(s)-\beta s)}\left(x\lambda +y(\lambda-1-\lambda(B(s)-\beta s))\right)^2~ds.\]
\end{example}

\subsection{Spectral Representations of $p_t$}\label{s.4.2}
In this subsection, we show how the coordinates described in Eq. (\ref{e.2.11}) can be used, along with the Fourier transforms in the variables $x$ and $y$, to arrive at expressions for the heat kernel.  We compute in the simple case $\alpha=-1$ and $\beta=0$ (which corresponds to the group $SE(2)$).  A similar description of the subelliptic heat kernel on $SE(2)$ can be found in Section 4.5 of \cite{ABGR}.

Using the general coordinates described in Eq. (\ref{e.2.11}), we can write
\[L=\frac{\partial^2}{\partial \theta^2}-\beta\frac{\partial}{\partial\theta}+\left(\begin{array}{c}\frac{\partial}{\partial x}\\\frac{\partial}{\partial y}\end{array}\right)^T\exp{(\theta A)}\overline{y}\overline{y}^T\exp{(\theta A^T)}\left(\begin{array}{c}\frac{\partial}{\partial x}\\\frac{\partial}{\partial y}\end{array}\right)\]
If we apply the Fourier transform in the variables $(x,y)$ (sending them to $(\xi_1,\xi_2)$), we get the operator
\[\hat{L}=\frac{\partial^2}{\partial \theta^2}-\beta\frac{\partial}{\partial\theta}-\left(\begin{array}{c}\xi_1\\\xi_2\end{array}\right)^T\exp{(\theta A)}\overline{y}\overline{y}^T\exp{(\theta A^T)}\left(\begin{array}{c}\xi_1\\\xi_2\end{array}\right).\]
Note that we are using as the definition of the Fourier transform
\[\hat{f}(\xi_1,\xi_2)=\int_{\mathbb{R}^2} e^{-ix\xi_1-iy\xi_2}f(x,y)dxdy,\]
for which the inverse Fourier transform is given by
\[\check{f}(x,y)=\frac{1}{4\pi^2}\int_{\mathbb{R}^2}e^{ix\xi_1+iy\xi_2}f(\xi_1,\xi_2)d\xi_1d\xi_2.\]
For the moment ignoring questions of convergence, if one can find eigenfunctions $\{f_i^{(\xi_1,\xi_2)}(\theta)\}$ and eigenvalues $\{\lambda_i^{(\xi_1,\xi_2)}\}$ which will depend on $(\xi_1,\xi_2)$ for $\hat{L}$, then a solution to the heat equation $u(t,\theta,x,y)$ solving
\[\frac{\partial u}{\partial t}=Lu\qquad u(0,\theta,x,y)=\psi(\theta,x,y)\]
can be written as
\begin{equation}\label{e.3990}
u(t,\theta,x,y)=\frac{1}{4\pi^2}\int_{\mathbb{R}^2}e^{ix\xi_1+iy\xi_2}\left(\sum_{i=1}^\infty a_i(\xi_1,\xi_2)e^{t\lambda_i^{(\xi_1,\xi_2)}}f_i^{(\xi_1,\xi_2)}(\theta)\right)d\xi_1d\xi_2
\end{equation}
where
\[\hat{\psi}(\theta,\xi_1,\xi_2)=\sum_{i=1}^\infty a_i(\xi_1,\xi_2)f_i^{(\xi_1,\xi_2)}(\theta).\]
Eq. (\ref{e.3990}) gives an expression for the heat kernel when $\psi(\theta,x,y)=\delta(\theta)\delta(x)\delta(y)$.  In the example below, we carry out these computations in a relatively simple case.

\begin{example}[$\alpha=-1$,$\beta=0$]

As discussed in Section \ref{s.4499}, when $\alpha=-1$ and $\beta=0$, we have the Lie algebra relations
\[[X,Y]=R\qquad [X,R]=-Y\qquad [Y,R]=0,\]
which is the Lie algebra $\mathfrak{se}(2)$, which is the Lie algebra of the Euclidean motions of the plane.  The adjoint representation exponentiates to the group
\[G=\left\{\left.\left(\begin{array}{ccc} \cos{\theta} & -\sin{\theta} & x\\ \sin{\theta} & \cos{\theta} & y\\ 0 & 0 & 1\end{array}\right)\right|\theta,x,y\in\mathbb{R}\right\}.\]
Note that this group is not simply connected.  In the coordinates $(\theta,x,y)$,
\[L=\frac{\partial^2}{\partial\theta^2}+\left(\cos{\theta}\frac{\partial}{\partial x}+\sin{\theta}\frac{\partial}{\partial y}\right)^2.\]
After applying the Fourier transform, we have the operator
\[\hat{L}=\frac{\partial^2}{\partial\theta^2}-\left(\xi_1\cos{\theta}+\xi_2\sin{\theta}\right)^2.\]

Suppose now that  $u(t,\theta,x,y)$ solves
\[\frac{\partial u}{\partial t}=Lu\qquad u(0,\theta,x,y)=\psi(\theta,x,y),\]
then $\hat{u}(t,\theta,\xi_1,\xi_2)$ solves
\[\frac{\partial \hat{u}}{\partial t}=\hat{L}\hat{u}=\frac{\partial^2\hat{u}}{\partial \theta^2}-\left(\xi_1\cos{\theta}+\xi_2\sin{\theta}\right)^2\hat{u}.\]
We switch to polar coordinates $(\xi_1,\xi_2)\rightarrow (\rho\cos{\phi},\rho\sin{\phi})$ to rewrite this as
\begin{align*}
\frac{\partial \hat{u}}{\partial t}&=\frac{\partial^2\hat{u}}{\partial \theta^2}-\rho^2(\cos{\theta}\cos{\phi}+\sin{\theta}\sin{\phi})^2\hat{u}\\ &=\frac{\partial^2\hat{u}}{\partial \theta^2}-\rho^2\cos^2{(\theta-\phi)}\hat{u}\\ &=\frac{\partial^2\hat{u}}{\partial \theta^2}-\left(\frac{\rho^2}{2}+\frac{\rho^2}{2}\cos(2(\theta-\phi))\right)\hat{u}.
\end{align*}
Setting $A\hat{u}:=\frac{\partial^2\hat{u}}{\partial \theta^2}-\left(\frac{\rho^2}{2}+\frac{\rho^2}{2}\cos(2(\theta-\phi))\right)\hat{u}$ we search for eigenfunctions of $A$.  An eigenfunction of $A$ with eigenvalue $\lambda$ will solve
\begin{equation}\label{e.11}
\frac{\partial^2\hat{u}}{\partial \theta^2}-\left(\frac{\rho^2}{2}+\lambda+\frac{\rho^2}{2}\cos(2(\theta-\phi))\right)\hat{u}=0
\end{equation}
Note that if $\hat{w}$ solves
\begin{equation}\label{e.12}
\frac{\partial^2\hat{w}}{\partial \theta^2}-\left(\frac{\rho^2}{2}+\lambda+\frac{\rho^2}{2}\cos(2\theta)\right)\hat{w}=0
\end{equation}
then $\hat{u}(\theta)=\hat{w}(\theta-\phi)$ will solve Eq. (\ref{e.11}).  Eq. (\ref{e.12}) can be rewritten as Mathieu's differential equation
\begin{equation}\label{e.13}
\frac{\partial^2\hat{w}}{\partial \theta^2}+\left(a-2q\cos(2\theta)\right)\hat{w}=0
\end{equation}
where $a=-\frac{\rho^2}{2}-\lambda$ and $q=\frac{\rho^2}{4}$.

Solutions exist to Eq. (\ref{e.13}) for any choice of $(a,q)$.  However, if we consider $q$ as fixed, then Eq. (\ref{e.13}) will have $2\pi$-periodic solutions for only certain values of the parameter $a$, the \textit{characteristic values}, indexed by a non-negative integer.  When $a$ is a characteristic value, it can be shown that any periodic solution is continuous in the parameter $q$ and that there cannot be two linearly independent periodic solutions except in the case where $q=0$ (where solutions are $\cos{(k\theta)}$ and $\sin{(k\theta)}$ with characteristic values $k^2$).  Those $2\pi$-periodic solutions which reduce to $\cos{(k\theta)}$ when $q=0$ are denoted $\textup{ce}_k(\theta,q)$, while those that reduce to $\sin{(k\theta)}$ are denoted $\textup{se}_k(\theta,q)$.  The characteristic value of $\textup{ce}_k(\theta,q)$ is denoted $a_k(q)$, while the characteristic value of $\textup{se}_k(\theta,q)$ is $b_k(q)$.  These functions are traditionally normalized so that
\[\pi=\int_0^{2\pi}\textup{ce}_k(\theta,q)^2d\theta=\int_0^{2\pi}\textup{se}_k(\theta,q)^2d\theta\]
for all values of $q$.  Furthermore, the set $\{\textup{ce}_k(\cdot,q),\textup{se}_k(\cdot,q)\}_{k=0}^\infty$ is orthogonal in $L^2([0,2\pi])$.  It follows that many functions $F$ on $[0,2\pi]$ can be expanded in a Mathieu function series:
\[F(\theta)=\sum_{k=0}^\infty A_k\textup{ce}_k(\theta,q)+B_k\textup{se}_r(\theta,q)\]
where
\[A_k=\frac{1}{\pi}\int_0^{2\pi}F(\theta)\textup{ce}_k(\theta,q)d\theta\qquad B_k=\frac{1}{\pi}\int_0^{2\pi}F(\theta)\textup{se}_k(\theta,q)d\theta\]
with $A_0=0$.  It turns out that $F$ has such a Mathieu expansion provided it has a Fourier expansion.  These facts and more can be found in \cite{ARSC,NIST}.

This gives our eigenfunctions of $A$ as the functions $\textup{ce}_k(\theta-\phi,\frac{\rho^2}{4})$ and $\textup{se}_k(\theta-\phi,\frac{\rho^2}{4})$ with eigenvalues $\alpha_k(\rho):=-\frac{\rho^2}{2}-a_k(\frac{\rho^2}{4})$ and $\beta_k(\rho):=-\frac{\rho^2}{2}-b_k(\frac{\rho^2}{4})$ respectively.  Note that since these functions are $2\pi$ periodic, they have the same orthogonality relations as those not shifted by $\phi$.

If follows that $\hat{u}$ has an expansion in polar coordinates in terms of  Mathieu functions
\[\hat{u}(t,\theta,\rho,\phi)=\sum_{k=0}^\infty A_ke^{\alpha_k(\rho) t}\textup{ce}_k(\theta-\phi,\frac{\rho^2}{4})+B_ke^{\beta_k(\rho) t}\textup{se}_k(\theta-\phi,\frac{\rho^2}{4})\]

The heat kernel $p_t(\theta,x,y)$ has the property that $p_0(\theta,x,y)=\delta(\theta)\delta(x)\delta(y)$ (where $\delta(x)$ denotes the point mass at $x=0$), and so since the Fourier transform of a delta function is the constant function 1, $\hat{p}_0(\theta,\xi_1,\xi_2)=\delta(\theta)$.  From this and the symmetries of the Mathieu functions, we see that
\[A_k=\frac{1}{\pi}\int_0^{2\pi}\delta(\theta)\textup{ce}_k(\theta-\phi,\frac{\rho^2}{4})d\theta=\frac{1}{\pi}\textup{ce}_k(-\phi,\frac{\rho^2}{4})=\frac{1}{\pi}\textup{ce}_k(\phi,\frac{\rho^2}{4})\]
and
\[B_k=\frac{1}{\pi}\int_0^{2\pi}\delta(\theta)\textup{se}_k(\theta-\phi,\frac{\rho^2}{4})d\theta=\frac{1}{\pi}\textup{se}_k(-\phi,\frac{\rho^2}{4})=-\frac{1}{\pi}\textup{se}_k(\phi,\frac{\rho^2}{4}).\]
Therefore, in polar coordinates, the Fourier transform of the heat kernel takes the form
\[\hat{p}_t(\theta,\rho,\phi)=\frac{1}{\pi}\sum_{k=0}^\infty \textup{ce}_k(\phi,\frac{\rho^2}{4})e^{\alpha_k(\rho) t}\textup{ce}_k(\theta-\phi,\frac{\rho^2}{4})-\textup{se}_k(\phi,\frac{\rho^2}{4})e^{\beta_k(\rho) t}\textup{se}_k(\theta-\phi,\frac{\rho^2}{4})\]
The heat kernel therefore takes the following integral form (obtained by applying the inverse Fourier transform in polar coordinates):
\[p_t(\theta,x,y)=\frac{1}{4\pi^2}\int_0^{2\pi}\int_0^\infty e^{ix\rho\cos{\phi}+iy\rho\sin{\phi}}\hat{p}_t(\theta,\rho,\phi)\rho ~d\rho ~d\theta.\]

\end{example}

\section{Heat semigroup gradient bounds}

\subsection{Curvature-Dimension Inequality}\label{s.4.3}

Again, we assume that $\mathfrak{g}$ is a Lie algebra determined by the relations
\[[X,Y]=\beta Y+R\qquad [X,R]=\alpha Y\qquad [Y,R]=0\]
for some $\alpha$ and $\beta\geq 0$, and $G$ is a Lie group with Lie algebra $\mathfrak{g}$.  Recall that the operator $L$ is the left-invariant differential operator on $G$ defined by
\[L=X^2+Y^2-\beta X.\]
Define the carr\'{e} du champs bilinear forms
\[\Gamma(f,g)=(Xf)(Xg)+(Yf)(Yg)\]
and
\[\Gamma^R(f,g)=(Rf)(Rg).\]
We will denote $\Gamma(f):=\Gamma(f,f)$ and $\Gamma^R(f):=\Gamma^R(f,f)$.  We also define
\[\Gamma_2(f)=\frac{1}{2}L\Gamma(f)-\Gamma(f,Lf)\]
and
\[\Gamma_2^R(f)=\frac{1}{2}L\Gamma^R(f)-\Gamma^R(f,Lf).\]

The purpose of this subsection is to prove the following.
\begin{proposition}\label{p.4.3.2}
For every $f\in C^\infty(G)$ and $\nu>0$,
\begin{equation}\label{e.4.3.2}
\Gamma_2(f)+\nu\Gamma_2^R(f)\geq \frac{1}{2}(Lf)^2+\frac{1}{2}(1-\nu^2\alpha^2)\Gamma^R(f)+(-\alpha^+-\beta^2-\frac{1}{\nu})\Gamma(f),
\end{equation}
where $\alpha^+=\max\{\alpha,0\}$.
\end{proposition}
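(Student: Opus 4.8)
The plan is to prove (\ref{e.4.3.2}) by a Bochner-type computation in the left-invariant frame $\{X,Y,R\}$ followed by an algebraic rearrangement of the resulting quadratic form into a sum of squares plus the terms on the right-hand side; this parallels the Sasakian ($\alpha=0$) argument of \cite{BB2,BG1}, with the torsion forcing the presence of $\nu\Gamma_2^R$ and a dependence on $\operatorname{sgn}\alpha$.

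First I would record the Bochner identities. Expanding $\tfrac{1}{2}L\Gamma(f)$ and $\tfrac{1}{2}L\Gamma^R(f)$ with the Leibniz rule and using $L=X^2+Y^2-\beta X$ gives
\[
\Gamma_2(f)=(X^2f)^2+(XYf)^2+(YXf)^2+(Y^2f)^2+(Xf)\,[L,X]f+(Yf)\,[L,Y]f,
\]
\[
\Gamma_2^R(f)=(XRf)^2+(YRf)^2+(Rf)\,[L,R]f.
\]
Hence the first concrete task is to compute $[L,X]$, $[L,Y]$, $[L,R]$ as left-invariant operators, using only the relations (\ref{e.2.9}) together with $XY=YX+\beta Y+R$, $XR=RX+\alpha Y$ and $YR=RY$. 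A direct computation gives $[L,X]=-2\beta Y^2-2YR$, $[L,Y]=2\beta YX+2RX+\alpha Y$ and $[L,R]=2\alpha YX+\alpha R$. After substitution, both $\Gamma_2(f)$ and $\Gamma_2^R(f)$ are explicit quadratic polynomials in the quantities $X^2f,XYf,YXf,Y^2f,XRf,YRf,Xf,Yf,Rf$, the only relation among them being $XYf-YXf=\beta Yf+Rf$.

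The second step is purely algebraic. Writing $Lf=X^2f+Y^2f-\beta Xf$, I would subtract $\tfrac{1}{2}(Lf)^2$ from $\Gamma_2(f)+\nu\Gamma_2^R(f)$: the diagonal part of the horizontal Hessian assembles into the manifest square $\tfrac{1}{2}(X^2f-Y^2f+\beta Xf)^2$, and the terms linear in $XRf$ and $YRf$ are completed into $\nu\big(XRf+\tfrac{1}{\nu}Yf\big)^2$ and $\nu\big(YRf-\tfrac{1}{\nu}Xf\big)^2$, which is exactly where the negative term $-\tfrac{1}{\nu}\big((Xf)^2+(Yf)^2\big)=-\tfrac{1}{\nu}\Gamma(f)$ is produced. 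What remains is a quadratic form in $XYf,YXf,Xf,Yf,Rf$ alone; using the constraint to eliminate one mixed derivative and completing the square in the other yields the $-\beta^2\Gamma(f)$ contribution and the $\Gamma^R$-term with coefficient $\tfrac{1}{2}(1-\nu^2\alpha^2)$.

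The step I expect to be the main obstacle is controlling the torsion cross-term $2\alpha(Rf)(YXf)$ coming from $[L,R]$ in combination with the Hessian squares $(XYf)^2+(YXf)^2$, where the asymmetry $XYf\ne YXf$ matters. I anticipate a case split on $\operatorname{sgn}\alpha$: which of the two mixed derivatives one keeps and completes the square in depends on the sign of $\alpha$, and for the unfavourable sign a residual indefinite term mixing $Yf$ and $Rf$ must be absorbed by a Young inequality $2ab\le\varepsilon a^2+\varepsilon^{-1}b^2$ with $\varepsilon$ tuned to $|\alpha|$, at the cost of exactly $-\alpha^+\Gamma(f)$, the term appearing in (\ref{e.4.3.2}). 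The essential work is then the bookkeeping that, once all squares are extracted and $\nu$ and $\varepsilon$ are chosen optimally, no negative contribution other than $-(\alpha^++\beta^2+\tfrac{1}{\nu})\Gamma(f)$ and $-\tfrac{1}{2}\nu^2\alpha^2\Gamma^R(f)$ survives. Structurally the inequality is an instance of the generalized curvature--dimension inequality of Baudoin--Garofalo type with horizontal dimension $2$, which explains the coefficient $\tfrac{1}{2}$ in front of $(Lf)^2$.
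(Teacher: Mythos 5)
Your overall route is the same as the paper's: expand $\Gamma_2$ and $\Gamma_2^R$ via Bochner/commutator identities in the frame $\{X,Y,R\}$ and then rearrange into squares. Your commutators $[L,X]=-2\beta Y^2-2YR$, $[L,Y]=2\beta YX+2RX+\alpha Y$, $[L,R]=2\alpha YX+\alpha R$ are correct; the exact identity $(X^2f)^2+(Y^2f-\beta Xf)^2-\tfrac12(Lf)^2=\tfrac12(X^2f-Y^2f+\beta Xf)^2$ is the paper's inequality (\ref{e.256}) in disguise; and the two completions of squares in $XRf,YRf$ against $Yf,Xf$ producing $-\tfrac1\nu\Gamma(f)$ are exactly what the paper does.

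The gap is in the last step, which you describe only heuristically and which is precisely where the difficulty sits. First, the term $-\alpha^+\Gamma(f)$ does not come from a Young inequality with $\varepsilon$ tuned to $|\alpha|$: it comes directly from the $-\alpha(Yf)^2$ produced when $2(Yf)(RXf)$ is rewritten as $2(Yf)(XRf)-2\alpha(Yf)^2$ (the torsion term $\alpha Y$ in $[L,Y]$), followed by the trivial bounds $-\alpha\ge-\alpha^{+}$ and $0\ge-\alpha^{+}$ on the coefficients of $(Yf)^2$ and $(Xf)^2$; no case split on $\operatorname{sgn}\alpha$ is needed. Second, and more seriously, the residual cross term you anticipate is real: carrying out the bookkeeping with $u=YXf$, the extraction of the square $2\bigl(u+\beta Yf+\tfrac{1+\nu\alpha}{2}Rf\bigr)^2$ leaves exactly $-2\beta\nu\alpha(Yf)(Rf)$ in addition to the terms on the right of (\ref{e.4.3.2}). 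This cannot be absorbed ``at the cost of exactly $-\alpha^+\Gamma(f)$'': any splitting $2\beta\nu|\alpha|\,|Yf|\,|Rf|\le\varepsilon(Rf)^2+\varepsilon^{-1}\beta^2\nu^2\alpha^2(Yf)^2$ either lowers the coefficient of $\Gamma^R(f)$ below $\tfrac12(1-\nu^2\alpha^2)$ or adds a $(Yf)^2$ cost governed by $\beta^2\nu^2\alpha^2$, not by $\alpha^{+}$. So the central claim of your plan --- that after optimal choices nothing negative survives beyond $-(\alpha^{+}+\beta^2+\tfrac1\nu)\Gamma(f)$ and $-\tfrac12\nu^2\alpha^2\Gamma^R(f)$ --- is unproved and, by the route you describe, does not close when $\alpha\beta\ne0$. (You could not have known this, but the paper's own proof drops the same term: in line (\ref{e.5.4}) the square is completed in $(XY+YX)f+\beta Yf$ while the cross term is against $(XY+YX)f-\beta Yf$, so the displayed identity is off by $-2\beta\nu\alpha(Yf)(Rf)$; the argument as written is airtight only when $\alpha=0$ or $\beta=0$.)
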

\begin{proof}

The following simple estimate will be useful
\begin{equation}\label{e.256}
(Lf)^2=(X^2f+(Y^2-\beta X)f)^2\leq 2(X^2f)^2+2(Y^2f-\beta Xf)^2.
\end{equation}
Calculation reveals that
\begin{align*}
\Gamma_2(f) & =(X^2f)^2+(Y^2f-\beta Xf)^2+(XYf)^2+(YXf)^2\\&-\beta^2(Xf)^2-2(Xf)(YRf)+2(Yf)(XRf)\\&+\beta(Yf)((XY+YX)f)-(\alpha+\beta^2)(Yf)^2-\beta(Yf)(Rf).
\end{align*}
Now
\begin{align*}
(XYf)^2+(YXf)^2 &=\frac{1}{2}((XY+XY)f)^2+\frac{1}{2}(\beta Yf+Rf)^2
\end{align*}
and so it follows that
\begin{align*}
\Gamma_2(f) &=(X^2f)^2+(Y^2f-\beta Xf)^2+\frac{1}{2}((XY+XY)f+\beta Yf)^2\\&+\frac{1}{2}(Rf)^2-\beta^2(Xf)^2-(\alpha+\beta^2)(Yf)^2\\ &+2(Yf)(XRf)-2(Xf)(YRf).
\end{align*}
We also observe that
\begin{align*}
\Gamma_2^R(f)&=(XRf)^2+(YRf)^2+\alpha(Rf)((XY+YX)f-\beta(Yf)).
\end{align*}

We now compute the expression of interest using the inequality in Eq. (\ref{e.256}):
\begin{align}
\Gamma_2(f)+\nu\Gamma_2^R(f)&\geq \frac{1}{2}(Lf)^2+\frac{1}{2}(Rf)^2-\beta^2(Xf)^2-(\alpha+\beta^2)(Yf)^2\nonumber\\ &-2(Xf)(YRf)+\nu (YRf)^2\label{e.5.2}\\&+2(Yf)(XRf)+ \nu (XRf)^2\label{e.5.3}\\ &+\frac{1}{2}((XY+XY)f+\beta Yf)^2+\nu\alpha(Rf)((XY+YX)f-\beta(Yf)).\label{e.5.4}
\end{align}
We treat these terms line by line by completing the square.  We note that line (\ref{e.5.2}) can be written
\[-2(Xf)(YRf)+\nu (YRf)^2=(\sqrt{\nu}(YRf)-\frac{1}{\sqrt{\nu}}(Xf))^2-\frac{1}{\nu}(Xf)^2\geq -\frac{1}{\nu}(Xf)^2,\]
and similarly line (\ref{e.5.3}) can be written
\[2(Yf)(XRf)+ \nu (XRf)^2=(\sqrt{\nu}(XRf)+\frac{1}{\sqrt{\nu}}(Yf))^2-\frac{1}{\nu}(Yf)^2\geq -\frac{1}{\nu}(Yf)^2.\]
We also complete the square in line (\ref{e.5.4}) to find
\begin{align*}
& \frac{1}{2}((XY+XY)f+\beta Yf)^2+\nu\alpha(Rf)((XY+YX)f-\beta(Yf))\\ &=\frac{1}{2}(((XY+YX)f+\beta Yf)+\nu\alpha(Rf))^2-\frac{1}{2}\nu^2\alpha^2(Rf)^2\\ &\geq -\frac{1}{2}\nu^2\alpha^2(Rf)^2
\end{align*}
It follows that
\begin{align*}
\Gamma_2(f)+\nu\Gamma_2^R(f)&\geq \frac{1}{2}(Lf)^2+(\frac{1}{2}-\frac{1}{2}\nu^2\alpha^2)(Rf)^2-(\beta^2+\frac{1}{\nu})(Xf)^2-(\alpha+\beta^2+\frac{1}{\nu})(Yf)^2\\ &\geq \frac{1}{2}(Lf)^2+\frac{1}{2}(1-\nu^2\alpha^2)\Gamma^R(f)+(-\alpha^+-\beta^2-\frac{1}{\nu})\Gamma(f),
\end{align*}
where $\alpha^+=\max\{\alpha,0\}$.
\end{proof}

\begin{remark}\label{r.4.3.1}
When $\alpha=0$, the above curvature dimension inequality becomes
\[\Gamma_2(f)+\nu\Gamma_2^R(f)\geq \frac{1}{2}(Lf)^2+\frac{1}{2}\Gamma^R(f)+(-\beta^2-\frac{1}{\nu})\Gamma(f),\]
which is a generalized curvature-dimension inequality $CD(-\beta^2,\frac{1}{2},1,2)$ of the type addressed in \cite{BB1} and \cite{BG1}.
\end{remark}

\subsection{Functional Inequalities}\label{s.4.4}

In this section, we use the curvature-dimension inequality of the last section to derive some functional inequalities.  The technique is similar to that found in the motivating works \cite{BBBC,BB2,BG1}, and relies on the use of a parabolic comparison theorem (see Proposition \ref{p.4.4.3} below).  In order to invoke this comparison, we first need to establish that derivatives of the heat kernel applied to compactly supported are bounded on $G$ uniformly in time.  In Section \ref{s.4.4.1}, we prove this result in broad generality.  In Section \ref{s.38}, we use the curvature-dimension inequalities of Section \ref{s.4.3} to derive heat semigroup gradient bounds.

\subsubsection{Preliminaries}\label{s.4.4.1}

In this subsection, we treat the general case.  Let $G$ now denote an arbitrary Lie group with Lie algebra $\mathfrak{g}$.  We continue to identify $X\in\mathfrak{g}$ with its left-invariant extension, and let $\hat{X}$ denote the right-invariant extension of $X$.  Let $H\subset \mathfrak{g}$ denote a horizontal subspace and $\langle\cdot,\cdot\rangle_H$ an inner product on $H$.  The left-invariant extension of $(H,\langle\cdot,\cdot\rangle_H)$ determines a sub-Riemannian structure on $G$.  We will denote the sub-Riemannian distance by $d_H$, and for convenience, we will denote $|g|_H=d_H(e,g)$.  For $g\in G$, let $B(g,r)$ denote the sub-Riemannian ball of radius $r$.

Let $L$ denote a left-invariant diffusion operator if the form
\[L=X_0+\sum_{k=1}^n X_k^2,\]
where $\operatorname{span}\{X_1,...,X_n\}=H$ and $\{X_1,...,X_n\}$ satisfies H\"{o}rmander's bracket generating condition.  We assume that $L$ is symmetric with respect to a left-invariant Haar measure $\mu$.  Let $P_t=e^{tL}$ denote the corresponding heat semigroup and $p_t$ the convolution kernel, i.e.
\[(P_tf)(g)=\int_G f(h)p_t(h^{-1}g)~d\mu(h).\]
The fact that $p_t$ is stochastically complete, i.e. $P_t1=1$ for all $t\geq 1$, follows from the left-invariance of $L$.  More generally, stochastic completeness will follow from a volume doubling condition (see Theorem 5.5.4 of \cite{sc_aspects}) which is satisfied for sub-Riemannian balls of small radius in this setting (see Theorem V.1.1 of \cite{VSC}, for example).  Finally, we recall the following heat kernel bounds of Theorem IX.1.3 of \cite{VSC}: There exists a positive integer $\nu$ and positive constants $C,a$ such that for any $\epsilon\in (0,1)$
\begin{equation}\label{e.4.4.4}
p_t(g)\leq C t^{-\nu/2}e^{-at}e^{- |g|_H^2/(4+\epsilon)t}
\end{equation}
for all $g\in G$ and $t>0$; in addition,
\begin{equation}\label{e.4.4.5}
p_t(g)\geq  C t^{-\nu/2}e^{- C|g|_H^2/t}
\end{equation}
for all $g\in G$ and $t\in (0,1).$


The main purpose of this subsection is to prove the following theorem:
\begin{theorem}\label{t.4.4.1}
Suppose $f\in C_c^\infty(G)$ and $X$ is any left-invariant vector field.  Then for any $T\geq0$,
\[\sup_{0\leq t\leq T}||XP_tf||_\infty<\infty.\]
\end{theorem}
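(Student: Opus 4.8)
The plan is to show that $XP_tf$ solves a parabolic equation and then use the commutation structure of left- and right-invariant vector fields together with the Gaussian-type heat kernel bounds (\ref{e.4.4.4})--(\ref{e.4.4.5}) to get a uniform sup bound. The key observation is that while left-invariant vector fields do not commute with $L$ in an obvious way, \emph{right}-invariant vector fields $\hat Y$ do commute with $L$ (since $L$ is left-invariant and a polynomial in left-invariant fields, and right translations commute with left translations). Hence, writing $P_tf = f * p_t$, one can transfer a right-invariant derivative onto $p_t$: schematically $\hat Y(P_tf) = P_t(\hat Y f)$ or, better, $\hat Y(f * p_t) = (\hat Y f) * p_t$ and also $= f * (Y p_t)$ after converting $\hat Y$ on the kernel variable into a left-invariant $Y$ via the adjoint action. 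So first I would set up these convolution identities carefully, using the formula $(P_tf)(g)=\int_G f(h) p_t(h^{-1}g)\,d\mu(h)$.

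Next, since $X\in\mathfrak g$ is fixed, I would express $X$ as a combination, with coefficients that are smooth functions on $G$, of the right-invariant fields $\hat X_1,\dots,\hat X_n$ spanning a complement — more precisely, at each point $g$ one has $X(g) = \sum_j c_j(g)\,\hat X_j(g)$ where $c_j$ are matrix coefficients of $\operatorname{Ad}(g)$, hence smooth but possibly growing in $g$. Then $(XP_tf)(g) = \sum_j c_j(g)\,(\hat X_j P_tf)(g)$, and each $\hat X_j P_tf$ can be written as a convolution of $f$ (or its left-invariant derivatives) against $p_t$, or against derivatives of $p_t$. The point is to arrange matters so that the final expression is $\int_G (\text{compactly supported smooth function of }h)\cdot(\text{kernel depending on }h^{-1}g)\,d\mu(h)$, where the kernel is either $p_t$ itself or a fixed left-invariant derivative $Y p_t$ of it. Derivatives of $p_t$ in the left-invariant directions satisfy the same kind of Gaussian upper bounds as $p_t$ (this follows from parabolic regularity / the subelliptic estimates of \cite{VSC}, or can be cited directly), possibly after absorbing a factor $t^{-1/2}$ which is harmless for $t$ bounded away from $0$, and near $t=0$ one uses that $P_tf\to f$ smoothly for $f\in C_c^\infty$.

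The estimate then closes as follows: because $f$ has compact support, say in a set $K$, the integral over $h$ is effectively over $K$, and for $g$ outside a large ball the factor $e^{-|h^{-1}g|_H^2/(4+\epsilon)t}$ in (\ref{e.4.4.4}) decays in $|g|_H$ fast enough to beat the polynomial growth of the coefficients $c_j(g) = $ matrix entries of $\operatorname{Ad}(g)$ (which grow at most exponentially in $|g|_H$, but the Gaussian wins once we also have the factor $e^{-at}$ and $t\le T$; here one must be a little careful and use the Gaussian decay in the \emph{spatial} variable uniformly over $t\in(0,T]$, noting $|h^{-1}g|_H \ge |g|_H - \operatorname{diam}_H(K)$ for $h\in K$). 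For $g$ in a fixed large ball, $c_j(g)$ is bounded and the remaining integral $\int_K |f \text{ or } Yf|(h)\, p_t(h^{-1}g)\,d\mu(h) \le \|f\|_{C^1}\, \mu(K)\, \sup p_t$, and $\sup_{g} p_t(g)\le C t^{-\nu/2}e^{-at}$ is not bounded as $t\to 0$, so here one instead estimates $\int_K |Yf|(h)p_t(h^{-1}g)d\mu(h)\le \|Yf\|_\infty P_t1(g) = \|Yf\|_\infty$ by stochastic completeness, which is the clean way to kill the $t\to 0$ singularity. I expect the main obstacle to be precisely this bookkeeping: controlling the $\operatorname{Ad}(g)$-coefficients' growth against the Gaussian kernel decay uniformly in $t\in(0,T]$, and simultaneously handling the small-$t$ blow-up of $\sup p_t$ by routing every derivative onto a compactly supported factor rather than onto the kernel whenever possible, invoking $P_t 1 = 1$ at the end.
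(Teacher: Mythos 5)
Your proposal is correct, and its first half is exactly the paper's key move: writing $X(g)=\widehat{\operatorname{Ad}_gX}(g)$ (your coefficients $c_j(g)$ are the matrix entries of $\operatorname{Ad}_g$), commuting the right-invariant fields past $P_t$ onto the compactly supported $f$, and then paying for this with the at-most-exponential growth of $\|\operatorname{Ad}_g\|_{op}$ in $|g|_H$ (which you would still need to prove; the paper does it via a chain of unit sub-Riemannian balls along a horizontal path, Proposition \ref{p.4.4.1}). Where you genuinely diverge is in closing the estimate. The paper changes variables to reduce everything to the single quantitative lemma $\sup_{0<t\leq T}\int_G e^{k|u|_H}p_t(u)\,d\mu(u)<\infty$, which it proves by comparing $p_t$ against the \emph{lower} bound (\ref{e.4.4.5}) at a rescaled time $S=9Ct$ and using $\int_G p_S\,d\mu=1$, together with a sub-multiplicativity argument to pass from small to large $T$; it also has to track the modular function because of the inversion $u\mapsto u^{-1}$. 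You instead split into $|g|_H\leq M$ (where the $\operatorname{Ad}_g$-coefficients are bounded and $|P_t(\hat X_jf)|\leq\|\hat X_jf\|_\infty$ by $P_t1=1$, which cleanly kills the $t\to0$ singularity) and $|g|_H>M$ (where $\sup_{h\in K}p_t(h^{-1}g)\leq Ct^{-\nu/2}e^{-(|g|_H-D)^2/(4+\epsilon)t}$ beats $e^{c|g|_H}$ uniformly in $t\in(0,T]$ — one checks that the supremum over $t$ of $t^{-\nu/2}e^{-s^2/(4+\epsilon)t}$ for $s\geq M-D>0$ is controlled, either at $t=T$ where completing the square wins, or at the interior critical point $t^*\sim s^2$ which only occurs for $s$ in a bounded range). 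This is more elementary: it uses only the upper bound (\ref{e.4.4.4}), needs neither the lower bound nor the semigroup iteration, and sidesteps the modular function. What the paper's route buys in exchange is the reusable exponential-integrability statement (\ref{e.9292}), which is of independent interest. Either way the argument is sound; just be sure to supply the exponential bound on $\operatorname{Ad}_g$ and to note that $\hat X_jf\in C_c^\infty(G)$, so routing the derivative onto $f$ really does leave you with a bounded, compactly supported integrand.
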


To prove Theorem \ref{t.4.4.1}, we first extend the given left-invariant sub-Riemannian structure on $G$ to a left-invariant Riemannian structure.  The resulting bound given in the proof of Theorem \ref{t.4.4.1} will depend on the choice of extension, but the bound will be finite regardless of extension.  To this end, let $\langle \cdot,\cdot\rangle$ denote an inner product on $\mathfrak{g}$ such that $\langle X,Y\rangle=\langle X,Y\rangle_H$ whenever $X,Y\in H$.  Extend $\langle\cdot,\cdot\rangle$ to a left-invariant Riemannian metric on $G$.  Let $d$ denote the Riemannian distance on $G$ and $|g|=d(e,g)$.  Note that for any $g,h\in G$, $|g|\leq |g|_H$ and $|gh|_H\leq |g|_H+|h|_H$.

For a linear operator $U:\mathfrak{g}\rightarrow\mathfrak{g}$, we let $||U||_{op}$ denote the operator norm of $U$ computed using the inner product on $\langle\cdot,\cdot\rangle$ on $\mathfrak{g}$ described above.  We will use the notation $|X|_\mathfrak{g}=\sqrt{\langle X,X\rangle}.$  For a vector field $Y$ on $G$ (not necessarily invariant), we will let $|Y(g)|$ denote length (with respect to the Riemannian metric) of the vector $Y(g)\in T_gG$.  For $f\in C^\infty(G)$, let $\nabla f$ denote the gradient of $f$.

\begin{proposition}\label{p.4.4.1}
There exists constants $C,c$ such that for all $g\in G$,
\[||\operatorname{Ad}_g||_{op}\leq Ce^{c|g|_H}.\]
\end{proposition}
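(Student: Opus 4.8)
The plan is to bound $\|\operatorname{Ad}_g\|_{op}$ by first writing an arbitrary $g\in G$ as a product of exponentials of horizontal vectors whose number and size are controlled by $|g|_H$, and then estimating $\operatorname{Ad}$ of each factor. Concretely, by the very definition of the sub-Riemannian distance, given $g$ with $|g|_H<\infty$ and any $\varepsilon>0$ there is a horizontal path from $e$ to $g$ of length at most $|g|_H+\varepsilon$; subdividing this path and approximating it by a concatenation of horizontal line segments $\exp(s_1 V_1)\cdots\exp(s_N V_N)$ with each $V_j\in H$ a unit vector and $\sum_j |s_j|\le 2(|g|_H+\varepsilon)$ (say), we get $g=\exp(s_1V_1)\cdots\exp(s_NV_N)$ in the limit — or, more cleanly, one works directly with a horizontal control path $\gamma$ solving $\dot\gamma(t)=\sum_k u_k(t)\,X_k(\gamma(t))$ on $[0,1]$ with $\int_0^1 |u(t)|\,dt$ close to $|g|_H$, and estimates $\operatorname{Ad}_{\gamma(t)}$ along the flow.

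The key step is the differential inequality for $t\mapsto \|\operatorname{Ad}_{\gamma(t)}\|_{op}$. Since $\operatorname{Ad}_{\gamma(t)\exp(hV)} = \operatorname{Ad}_{\gamma(t)}\circ\operatorname{Ad}_{\exp(hV)} = \operatorname{Ad}_{\gamma(t)}\circ e^{h\,\operatorname{ad}_V}$, differentiating gives
\[
\frac{d}{dt}\operatorname{Ad}_{\gamma(t)} = \operatorname{Ad}_{\gamma(t)}\circ \operatorname{ad}_{\sum_k u_k(t)X_k},
\]
hence $\frac{d}{dt}\|\operatorname{Ad}_{\gamma(t)}\|_{op} \le \|\operatorname{Ad}_{\gamma(t)}\|_{op}\cdot \big\|\operatorname{ad}_{\sum_k u_k(t)X_k}\big\|_{op} \le M\,|u(t)|\,\|\operatorname{Ad}_{\gamma(t)}\|_{op}$, where $M=\max_{|V|_\mathfrak{g}=1,\,V\in H}\|\operatorname{ad}_V\|_{op}$ is a finite constant depending only on the fixed inner product on $\mathfrak{g}$. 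Grönwall's inequality then yields $\|\operatorname{Ad}_g\|_{op}=\|\operatorname{Ad}_{\gamma(1)}\|_{op}\le \|\operatorname{Ad}_e\|_{op}\,\exp\!\big(M\int_0^1|u(t)|\,dt\big) \le \exp\!\big(M(|g|_H+\varepsilon)\big)$, and letting $\varepsilon\to 0$ gives the claim with $C=1$ and $c=M$. (One may of course absorb a harmless constant into $C$ to avoid fussing over the $\varepsilon$.)

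The main obstacle is the regularity/approximation issue at the start: horizontal length-minimizing or near-minimizing curves need not be smooth, so one should be careful to justify the differential inequality for an $L^1$ control $u$. This is standard — one can either work with absolutely continuous horizontal curves (the ODE $\dot\gamma=\sum u_k X_k$ with $u\in L^1$ has an absolutely continuous solution, and $\operatorname{Ad}_{\gamma(t)}$ is then absolutely continuous with the derivative above holding a.e., so Grönwall still applies), or first prove the bound for piecewise-smooth (indeed piecewise-horizontal-line) curves, whose lengths approximate $|g|_H$ arbitrarily well by definition of $d_H$, and then pass to the limit using that both sides are continuous in $g$. Either route is routine; the geometric content is entirely in the elementary submultiplicativity $\operatorname{Ad}_{gh}=\operatorname{Ad}_g\operatorname{Ad}_h$ together with the bound $\|\operatorname{ad}_V\|_{op}\le M$ for unit horizontal $V$.
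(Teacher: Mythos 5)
Your proof is correct, but it runs along a genuinely different track from the paper's. The paper's argument is discrete: it takes a horizontal path $\gamma$ from $e$ to $g$, chops it into $n\leq l(\gamma)+1$ pieces each of sub-Riemannian diameter at most $1$, writes $g$ as the product of the corresponding increments (each lying in the unit ball $B(e,1)$), and uses the homomorphism property to get $\|\operatorname{Ad}_g\|_{op}\leq C^n$ with $C=\sup_{h\in B(e,1)}\|\operatorname{Ad}_h\|_{op}$, hence $c=\log C$. You instead differentiate $\operatorname{Ad}_{\gamma(t)}$ along the control path and apply Gr\"onwall, which is the infinitesimal version of the same submultiplicativity. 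Your route buys an explicit, purely Lie-algebraic constant $c=\max\{\|\operatorname{ad}_V\|_{op}: V\in H,\ |V|_{\mathfrak g}=1\}$ with $C=1$, and it avoids having to know that the supremum of $\|\operatorname{Ad}\|_{op}$ over the unit sub-Riemannian ball is finite (which the paper's choice of $C$ tacitly requires, via compactness of small sub-Riemannian balls). The price is the measurability/regularity bookkeeping for absolutely continuous horizontal curves with $L^1$ controls, which you correctly identify and dispose of by either of the two standard devices (a.e.\ differentiation of $t\mapsto\operatorname{Ad}_{\gamma(t)}$, or approximation by concatenations of horizontal segments); the paper's discrete subdivision sidesteps differentiability entirely, needing only the triangle inequality for $d_H$. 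Both arguments rest on the same geometric content, namely that $\operatorname{Ad}:G\to\operatorname{End}(\mathfrak g)$ is a homomorphism, so either is acceptable.
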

\begin{proof}
Let $\gamma:[0,1]\rightarrow G$ be a differentiable horizontal path connecting $e$ to $g$.  Define a sequence of times $0=t_0<t_1<t_2<...<t_n=1$ recursively by
\[t_{i+1}=\left\{\begin{array}{cc}\inf_{t_i<s<1}\{s|d_H(\gamma(t_i),\gamma(s))=1\} & \text{if such a }s\text{ exists}\\ 1 & \text{otherwise}\end{array}\right..\]
Note that this implies that $n-1<l(\gamma)$, where $l(\gamma)$ denotes the sub-Riemannian length of $\gamma$. Set
\[C:=\sup_{g\in B(e,1)}||\operatorname{Ad}_g||_{op},\]
where $B(e,1)$ denotes the sub-Riemannian ball about the identity $e$ of radius $1$.  Since
\[g=\gamma(t_0)^{-1}\gamma(t_1)\gamma(t_1)^{-1}\gamma(t_2)\hdots \gamma(t_{n-1})\gamma(t_{n-1})^{-1}\gamma(t_n),\]
and $\operatorname{Ad}:G\rightarrow \operatorname{End}(\mathfrak{g})$ is a homomorphism, it follows that
\[\operatorname{Ad}_g=\operatorname{Ad}_{\gamma(t_0)\gamma(t_1)^{-1}}\operatorname{Ad}_{\gamma(t_1)\gamma(t_2)^{-1}}\hdots \operatorname{Ad}_{\gamma(t_{n-1})^{-1}\gamma(t_n)},\]
and so
\[||\operatorname{Ad}_g||_{op}\leq C^n\leq CC^{l(\gamma)}.\]
The result follows after setting $c=\log{C}$ and taking the infimum over all such paths $\gamma$.
\end{proof}
\begin{corollary}\label{c.1}
For any $X\in\mathfrak{g}$ and $g\in G$
\[|\hat{X}(g)|\leq  Ce^{c|g|_H}|X|_\mathfrak{g},\]
and consequently, for any $g,h\in G$,
\[|\widehat{Ad_gX}(h)|\leq C^2e^{c(|g|_H+|h|_H)}|X|_\mathfrak{g}.\]
\end{corollary}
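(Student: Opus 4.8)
The plan is to deduce Corollary \ref{c.1} directly from Proposition \ref{p.4.4.1} together with the relationship between right-invariant vector fields and the adjoint map. First I would recall that for a left-invariant vector field associated to $X \in \mathfrak{g}$ and its right-invariant counterpart $\hat{X}$, one has the pointwise identity $\hat{X}(g) = (L_g)_* \operatorname{Ad}_g X$, i.e. the right-invariant extension at $g$ is obtained by left-translating the vector $\operatorname{Ad}_g X \in \mathfrak{g} = T_e G$. Since the Riemannian metric on $G$ is left-invariant by construction, left translation $L_g$ is an isometry from $T_e G$ to $T_g G$, and therefore $|\hat{X}(g)| = |\operatorname{Ad}_g X|_{\mathfrak{g}}$. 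Bounding the right-hand side by $\|\operatorname{Ad}_g\|_{op}\,|X|_{\mathfrak{g}}$ and applying Proposition \ref{p.4.4.1} immediately yields the first inequality $|\hat{X}(g)| \leq C e^{c|g|_H} |X|_{\mathfrak{g}}$.

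For the second inequality, I would simply iterate. Fix $g$ and apply the first inequality with the element $\operatorname{Ad}_g X \in \mathfrak{g}$ in place of $X$: this gives
\[
|\widehat{\operatorname{Ad}_g X}(h)| \leq C e^{c|h|_H}\,|\operatorname{Ad}_g X|_{\mathfrak{g}} \leq C e^{c|h|_H}\,\|\operatorname{Ad}_g\|_{op}\,|X|_{\mathfrak{g}}.
\]
Then bound $\|\operatorname{Ad}_g\|_{op} \leq C e^{c|g|_H}$ by Proposition \ref{p.4.4.1} once more, and combine the exponentials using $e^{c|g|_H} e^{c|h|_H} = e^{c(|g|_H + |h|_H)}$ to obtain the stated bound with constant $C^2$.

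The only genuinely substantive point — and hence the step I would be most careful about — is the identity $|\hat{X}(g)| = |\operatorname{Ad}_g X|_{\mathfrak{g}}$, which rests on two facts: that $\hat{X}(g)$ is the left-translate by $g$ of $\operatorname{Ad}_g X$, and that the metric is left-invariant. The first is a standard computation: writing $\hat{X}(g) = \frac{d}{dt}\big|_{t=0} \exp(tX) g = \frac{d}{dt}\big|_{t=0} g \,(g^{-1}\exp(tX)g) = (L_g)_* \frac{d}{dt}\big|_{t=0}\exp(t\operatorname{Ad}_{g^{-1}}X)$; one should be mindful of whether the convention gives $\operatorname{Ad}_g$ or $\operatorname{Ad}_{g^{-1}}$, but since Proposition \ref{p.4.4.1} bounds $\|\operatorname{Ad}_g\|_{op}$ for all $g$ (and $|g^{-1}|_H = |g|_H$ by the symmetry of the sub-Riemannian distance), either convention yields the same estimate. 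Everything else is routine, so the corollary follows with no further obstacle.
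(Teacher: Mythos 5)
Your argument is correct and is essentially the same as the paper's: compute $\hat{X}(g)=(L_g)_*\operatorname{Ad}_{g^{-1}}X$, use left-invariance of the metric to get $|\hat{X}(g)|=|\operatorname{Ad}_{g^{-1}}X|_{\mathfrak{g}}$, apply Proposition \ref{p.4.4.1} (with $|g^{-1}|_H=|g|_H$), and iterate with $\operatorname{Ad}_gX$ in place of $X$ for the second bound. Your caveat about the $\operatorname{Ad}_g$ versus $\operatorname{Ad}_{g^{-1}}$ convention is exactly the right point to flag, and it is resolved just as you say.
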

\begin{proof}
Note that
\[\hat{X}(g)=\frac{d}{ds}|_{s=0} \exp{(sX)}g=\frac{d}{ds}|_{s=0} gg^{-1}\exp{(sX)}g=\frac{d}{ds}|_{s=0}g\exp{(s\operatorname{Ad}_{g^{-1}}X)},\]
and so
\[|\hat{X}(g)|=|\operatorname{Ad}_{g^{-1}}X|_\mathfrak{g}\leq Ce^{c|g|_H}|X|_\mathfrak{g}.\]
Finally,
\[|\widehat{Ad_gX}(h)|\leq Ce^{c|h|_H}|Ad_gX|_\mathfrak{g}\leq C^2e^{c|h|_H}e^{c|g|_H}|X|_\mathfrak{g}.\]
\end{proof}

\begin{proof}[Proof of Theorem \ref{t.4.4.1}]
We first observe that at time $t=0$, $||XP_tf||_\infty=||Xf||_\infty<\infty$.  So it suffices to show that
\[\sup_{0< t\leq T}||XP_tf||_\infty<\infty.\]
Now since for any $F\in C^\infty(G)$,
\[(XF)(g)=\frac{d}{ds}|_{s=0}F(ge^{sX})=\frac{d}{ds}|_{s=0}F(ge^{sX}g^{-1}g)=(\widehat{Ad_gX}F)(g),\]
and since $P_t$ commutes with right-invariant vector fields, it follows that
\[(XP_tf)(g)=(\widehat{Ad_gX}P_tf)(g)=(P_t\widehat{Ad_gX}f)(g).\]
Using Corollary \ref{c.1}, we see that
\[|(\widehat{Ad_gX}f)(h)|\leq |\widehat{Ad_gX}(h)||\nabla f(h)|\leq C^2e^{c(|g|_H+|h|_H)}|\nabla f(h)|,\]
and it follows that
\begin{align*}
|(P_t\widehat{Ad_gX}f)(g)|&
\leq \int_G |\widehat{Ad_gX}(h)||\nabla f(h)|p_t(h^{-1}g)~d\mu(h)\\ &
\leq C^2\int_G |\nabla f(h)|e^{c|h|_H}e^{c|g|_H}p_t(h^{-1}g)~d\mu(h)\\ & \leq C^2\int_G |\nabla f(h)|e^{2c|h|_H}e^{c|h^{-1}g|_H}p_t(h^{-1}g)~d\mu(h)
\end{align*}
where in the third inequality we've used the fact that $|g|_H=|hh^{-1}g|_H\leq |h|_H+|h^{-1}g|_H$.  Now $|\nabla f|$ is supported on a compact set $\Omega\subset G$.  So in fact
\begin{align*}
|(P_t\widehat{Ad_gX}f)(g)|&\leq \tilde{C}\int_Ge^{c|h^{-1}g|_H}p_t(h^{-1}g)~d\mu(h)\\ &=\tilde{C}\int_G e^{c|u^{-1}|_H}p_t(u^{-1})~d\mu(u),
\end{align*}
where $\tilde{C}=C^2\left(\sup_{h\in \Omega}|\nabla f(h)|e^{2c|h|_H}\right)$ and the second line follows from the change of variables $u=g^{-1}h$.  It follows that
\[\sup_{0< t\leq T}||XP_tf||_\infty\leq \tilde{C}\left(\sup_{0< t\leq T} \int_G e^{c|u^{-1}|_H}p_t(u^{-1})~d\mu(u)\right).\]
Now $|u^{-1}|_H=|u|_H$ and $p_t(u^{-1})~d\mu(u)=m^{-1}(u)p_t(u)~d\mu(u)$, where $m$ denotes the modular function.  Since $m$ is a homomorphism, by arguments similar to those of Proposition \ref{p.4.4.1}, one can show that $m$ (and hence $m^{-1}$) has at most exponential growth in the horizontal distance.  Therefore, to prove the desired result, it suffices to show that for any $k>0$, we have
\begin{equation}\label{e.9292}
\sup_{0< t\leq T} \int_G e^{k|u|_H}p_t(u)~d\mu(u)<\infty.
\end{equation}
That the distance is exponentially integrable with respect to the heat kernel in this setting is discussed in Section IV.4b of \cite{robinson}.  We will provide an alternative proof.

We first observe that it is sufficient to prove Eq. (\ref{e.9292}) for small $T$, since the semigroup property and the fact that $u\rightarrow e^{k|u|_H}$ is sub-multiplicative will then give the result for larger $T$.  To see this, suppose Eq. (\ref{e.9292}) is satisfied and suppose $T<S\leq 2T$.  Set $t=S-T$. Then
\begin{align*}
\int_G e^{k|u|_H}p_S(u)d\mu(u) & =\int_G e^{k|u|_H}\left(\int_G p_t(x)p_T(x^{-1}u)~d\mu(x)\right)~d\mu(u)\\ & =\int_G p_t(x)\left(\int_G e^{k|u|_H}p_T(x^{-1}u)~d\mu(u)\right)~d\mu(x)\\ & =\int_G p_t(x)\left(\int_G e^{k|xy|_H}p_T(y)~d\mu(y)\right)~d\mu(x)\\ &\leq \int_G e^{k|x|_H}p_t(x)\left(\int_G e^{k|y|_H}p_T(y)~d\mu(y)\right)~d\mu(x)\\ &\leq \left(\sup_{0< t\leq T} \int_G e^{k|u|_H}p_t(u)~d\mu(u)\right)^2
\end{align*}
where in the third line above we have used the change of variables $y=x^{-1}u$.  More generally,
\[\sup_{0< t\leq nT} \int_G e^{k|u|_H}p_t(u)~d\mu(u)\leq \left(\sup_{0< t\leq T} \int_G e^{k|u|_H}p_t(u)~d\mu(u)\right)^n.\]

Now using the heat kernel upper bounds of Eq. (\ref{e.4.4.4}) with $\epsilon=\frac{1}{2}$ and the heat kernel lower bounds of Eq. (\ref{e.4.4.5}), we note that for any $t>0$ and $0<S<1$,
\begin{align}
\int_G e^{k|u|_H}p_t(u)~d\mu(u) &=\int_G e^{k|u|_H}\frac{p_t(u)}{p_S(u)}p_S(u)~d\mu(u)\nonumber\\ &\leq \int_G e^{k|u|_H}\frac{Ct^{-\nu/2}e^{at}e^{-2|u|_H^2/9t}}{CS^{-\nu/2}e^{-C|u|_H^2/S}}p_S(u)~d\mu(u)\nonumber\\&\leq \left(\frac{S}{t}\right)^{\nu/2}e^{at}\int_G e^{k|u|_H+|u|_H^2\left(-\frac{2}{9t}+\frac{C}{S}\right)}p_S(u)~d\mu(u)\nonumber
\end{align}
Now fix $0<t\leq \frac{1}{10C}$ where $C$ is the constant in Eqs. (\ref{e.4.4.4}) and (\ref{e.4.4.5}), and set $S:=9tC<1$.  For these values of $t$ and $S$, the above calculations imply
\begin{align*}
\int_G e^{k|u|_H}p_t(u)~d\mu(u) &\leq \left(9C\right)^{\nu/2}e^{at}\int_G e^{k|u|_H+|u|_H^2\left(-\frac{2}{9t}+\frac{1}{9t}\right)}p_S(u)~d\mu(u)\\ &=\left(9C\right)^{\nu/2}e^{at}\int_G e^{k|u|_H+|u|_H^2\left(-\frac{1}{9t}\right)}p_S(u)~d\mu(u)\\ &=\left(9C\right)^{\nu/2}e^{at}\int_G e^{-(\frac{1}{3\sqrt{t}}|u|_H-\frac{3k\sqrt{t}}{2})^2}e^{\frac{9k^2t}{4}}p_S(u)~d\mu(u)\\ &\leq \left(9C\right)^{\nu/2}e^{at}e^{\frac{9k^2t}{4}}.
\end{align*}
It follows that
\[\sup_{0< t\leq \frac{1}{10C}} \int_G e^{k|u|_H}p_t(u)d\mu(u)<\infty,\]
which establishes the desired result.
\end{proof}

Theorem \ref{t.4.4.1} shows that Hypothesis 1.4 of \cite{BG1} is satisfied in this general setting.  This allows us to use the following parabolic comparison in the next subsection, where we will apply the following Proposition with $u$ and $v$ equal to linear combinations of $\Gamma(P_tf)$ and $\Gamma^R(P_tf)$.  A more general statement and proof can be found as Proposition 4.5 of \cite{BG1}.
\begin{proposition}\label{p.4.4.3}
Let $T>0$.  Suppose that $u,v:G\times[0,T]\rightarrow \mathbb{R}$ are smooth functions such $\sup_{t\in [0,T]}||u(\cdot,t)||_\infty<\infty$ and $\sup_{t\in [0,T]}||v(\cdot,t)||_\infty<\infty$.  Suppose
\[Lu+\frac{\partial u}{\partial t}\geq v\]
on $G\times [0,T]$.  Then for all $x\in G$,
\[P_T(u(\cdot,T))(x)\geq u(x,0)+\int_0^TP_s(v(\cdot,s))(x)ds.\]
\end{proposition}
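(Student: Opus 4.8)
The statement to prove is Proposition \ref{p.4.4.3}, a parabolic comparison theorem: if $Lu + \partial_t u \geq v$ with $u, v$ bounded uniformly in $t \in [0,T]$, then $P_T(u(\cdot, T))(x) \geq u(x,0) + \int_0^T P_s(v(\cdot,s))(x)\, ds$.

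\textbf{Proof proposal.}

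The plan is to exploit the standard fact that for a smooth function $\phi(g,s)$ on $G \times [0,T]$ the map $s \mapsto P_s(\phi(\cdot, s))(x)$ is differentiable, with derivative $\frac{d}{ds} P_s(\phi(\cdot,s))(x) = P_s\big( (L\phi)(\cdot,s) + (\partial_s \phi)(\cdot,s)\big)(x)$, valid as long as the various quantities are controlled well enough to differentiate under the semigroup and the integral. I would first apply this identity to $\phi = u$. It gives $\frac{d}{ds} P_s(u(\cdot,s))(x) = P_s\big( Lu(\cdot,s) + \partial_s u(\cdot,s)\big)(x) \geq P_s(v(\cdot,s))(x)$, where the inequality uses the hypothesis $Lu + \partial_t u \geq v$ together with the positivity-preserving property of $P_s$ (the semigroup is sub-Markovian, being generated by a symmetric subelliptic diffusion, so $w \geq 0$ implies $P_s w \geq 0$). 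Then I would integrate this differential inequality from $s = 0$ to $s = T$:
\[
P_T(u(\cdot,T))(x) - u(x,0) = P_0(u(\cdot,0))(x)\big|_{s=0}^{s=T} = \int_0^T \frac{d}{ds}P_s(u(\cdot,s))(x)\,ds \geq \int_0^T P_s(v(\cdot,s))(x)\, ds,
\]
which is exactly the claimed inequality, using $P_0 = \mathrm{Id}$.

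The main technical obstacle is justifying the differentiation identity $\frac{d}{ds} P_s(u(\cdot,s))(x) = P_s(Lu(\cdot,s) + \partial_s u(\cdot,s))(x)$ and, in particular, the validity of differentiating under the integral sign defining $P_s$ and of the splitting of the derivative into the "generator" part and the "time-derivative" part. This is where the uniform-in-time boundedness hypotheses $\sup_{t \in [0,T]}\|u(\cdot,t)\|_\infty < \infty$ and $\sup_{t \in [0,T]}\|v(\cdot,t)\|_\infty < \infty$ enter: combined with the Gaussian-type heat kernel bounds of Eq. (\ref{e.4.4.4}) and the off-diagonal derivative estimates that follow from Theorem \ref{t.4.4.1} (which controls $XP_tf$ for left-invariant $X$ and hence, by iteration, $Lu$ and its semigroup action), they provide the domination needed to differentiate under the integral. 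One writes the difference quotient $\frac{1}{h}\big(P_{s+h}(u(\cdot,s+h))(x) - P_s(u(\cdot,s))(x)\big)$, splits it as $\frac{1}{h}\big(P_{s+h}(u(\cdot,s+h)) - P_{s+h}(u(\cdot,s))\big)(x) + \frac{1}{h}\big(P_{s+h}(u(\cdot,s)) - P_s(u(\cdot,s))\big)(x)$; the first term tends to $P_s(\partial_s u(\cdot,s))(x)$ by the boundedness of $\partial_s u$ on $[0,T]$ (a consequence of smoothness plus the uniform bounds) and continuity of the semigroup, and the second tends to $\frac{d}{ds}P_s(u(\cdot,s))(x)\big|_{\text{frozen } u} = P_s(Lu(\cdot,s))(x)$ by the standard smoothing property $\frac{d}{ds}P_s g = LP_s g = P_s Lg$ for $g$ in a suitable class.

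Since the excerpt explicitly notes that "A more general statement and proof can be found as Proposition 4.5 of \cite{BG1}," I would keep the argument brief: cite \cite{BG1} for the general comparison principle, and indicate that the hypotheses of that result are met here because Theorem \ref{t.4.4.1} supplies exactly the uniform boundedness (Hypothesis 1.4 of \cite{BG1}) required to run the differentiation-under-the-semigroup argument. In short, the proof is: (i) establish differentiability of $s \mapsto P_s(u(\cdot,s))(x)$ with the expected derivative, invoking Theorem \ref{t.4.4.1} and the heat kernel bounds (\ref{e.4.4.4})–(\ref{e.4.4.5}) for the needed domination; (ii) bound the derivative below by $P_s(v(\cdot,s))(x)$ using the hypothesis and positivity of $P_s$; (iii) integrate from $0$ to $T$ and use $P_0 = \mathrm{Id}$.
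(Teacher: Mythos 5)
The paper offers no proof of its own here, deferring entirely to Proposition 4.5 of \cite{BG1}, and your argument---differentiate $s\mapsto P_s(u(\cdot,s))(x)$, bound the derivative below by $P_s(v(\cdot,s))(x)$ using positivity of $P_s$, and integrate from $0$ to $T$---is precisely the argument of that reference, so you are taking the same route and the outline is correct. The one caveat is that the identity $\frac{d}{ds}P_s(u(\cdot,s))=P_s\bigl(Lu(\cdot,s)+\partial_s u(\cdot,s)\bigr)$ cannot be obtained from the naive commutation $LP_sg=P_sLg$, since in the intended application $u(\cdot,s)$ is bounded but not compactly supported and $Lu(\cdot,s)$ need not be integrable against the kernel; the rigorous version in \cite{BG1} instead writes $\partial_s p_s = L_yp_s$, pairs with $h_nu$ for cutoffs $h_n\nearrow 1$, and integrates by parts, with the error terms controlled by bounds on $\Gamma(u)$ (which is what Theorem \ref{t.4.4.1} ultimately supplies) rather than by a domination of $Lu$ itself.
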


\subsubsection{Functional Inequalities}\label{s.38}

We now return to the specific case examined in this work; namely, $G$ is a 3-dimensional solvable Lie group with Lie algebra $\mathfrak{g}$.  In section \ref{s.4.3}, it was shown that for any $f\in C^\infty(G)$,
\begin{equation}\label{e.4.4.2.1}
\Gamma_2(f)+\nu\Gamma_2^R(f)\geq \frac{1}{2}(Lf)^2+\frac{1}{2}(1-\nu^2\alpha^2)\Gamma^R(f)+(-\alpha^+-\beta^2-\frac{1}{\nu})\Gamma(f).
\end{equation}
Our results in the section concern the case where $\alpha\neq 0$.  Recall that by Remark \ref{r.4.3.1}, the case where $\alpha=0$ is covered in previous work \cite{BB2}.  Note that Theorem \ref{t.4.4.1} indicates that, for any $T\geq 0$, $\sup_{0\leq t\leq T}||\Gamma(P_tf)||_\infty<\infty$ and $\sup_{0\leq t\leq T}||\Gamma^R(P_tf)||_\infty<\infty$ whenever $f\in C_c^\infty(G)$.

\begin{proposition}\label{p.4.4.5}
Suppose $G$ has parameters $(\alpha,\beta)$ with $\alpha\neq 0$ and set $\kappa=\beta^2+\alpha^+$.  Then for every $f\in C_c^\infty$ and $T\in [0,\frac{1}{2\kappa}\ln{\left(\frac{\kappa+|\alpha|}{|\alpha|}\right)})$,
\begin{equation}\label{e.3102}
\Gamma(P_Tf)+\frac{1}{|\alpha|}\Gamma^R(P_Tf)\leq \frac{\kappa e^{2\kappa T}}{\kappa+|\alpha|(1-e^{2\kappa T})}P_T(\Gamma(f))+\frac{1}{|\alpha|}P_T(\Gamma^R(f)).
\end{equation}
\end{proposition}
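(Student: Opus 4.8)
The plan is to run the semigroup interpolation argument of Baudoin--Garofalo: feed the generalized curvature-dimension inequality of Proposition~\ref{p.4.3.2} into the parabolic comparison principle of Proposition~\ref{p.4.4.3}. Fix $f\in C_c^\infty(G)$, write $\kappa=\beta^2+\alpha^+$, recall $\alpha\neq 0$, and fix $T\in[0,\frac{1}{2\kappa}\ln(\frac{\kappa+|\alpha|}{|\alpha|}))$. Let
\[
a(t)=\frac{\kappa}{(\kappa+|\alpha|)e^{-2\kappa t}-|\alpha|},\qquad t\in[0,T].
\]
The restriction on $T$ is exactly what keeps the denominator strictly positive on $[0,T]$, so $a$ is smooth and positive there; moreover one checks directly that $a(0)=1$, that $a(T)=\frac{\kappa e^{2\kappa T}}{\kappa+|\alpha|(1-e^{2\kappa T})}$, that $a$ solves the Riccati equation $a'=2\kappa a+2|\alpha|a^2$, and that $a(t)\ge 1$ on $[0,T]$ (this last being equivalent to $e^{-2\kappa t}\le1$). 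Now set
\[
u(x,t)=a(t)\,\Gamma\big(P_{T-t}f\big)(x)+\frac{1}{|\alpha|}\,\Gamma^R\big(P_{T-t}f\big)(x),\qquad(x,t)\in G\times[0,T].
\]
Then $u(\cdot,0)=\Gamma(P_Tf)+\frac{1}{|\alpha|}\Gamma^R(P_Tf)$ is the left-hand side of \eqref{e.3102}, while $P_T(u(\cdot,T))=a(T)P_T(\Gamma(f))+\frac{1}{|\alpha|}P_T(\Gamma^R(f))$ is its right-hand side, so it suffices to prove $P_T(u(\cdot,T))(x)\ge u(x,0)$ for all $x\in G$.

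Write $g_t=P_{T-t}f$, so that $\partial_t g_t=-Lg_t$. Using the identities $\frac{1}{2}L\Gamma(g)-\Gamma(g,Lg)=\Gamma_2(g)$ and $\frac{1}{2}L\Gamma^R(g)-\Gamma^R(g,Lg)=\Gamma_2^R(g)$, a direct computation gives
\[
Lu+\frac{\partial u}{\partial t}=a'(t)\,\Gamma(g_t)+2a(t)\,\Gamma_2(g_t)+\frac{2}{|\alpha|}\,\Gamma_2^R(g_t)=a'(t)\,\Gamma(g_t)+2a(t)\Big(\Gamma_2(g_t)+\nu(t)\,\Gamma_2^R(g_t)\Big),
\]
where $\nu(t):=\frac{1}{|\alpha|\,a(t)}>0$. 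Applying Proposition~\ref{p.4.3.2} to $g_t$ with this value of $\nu$, discarding the nonnegative term $\frac{1}{2}(Lg_t)^2$, and using $\frac{a(t)}{\nu(t)}=|\alpha|a(t)^2$ together with $a(t)\nu(t)^2\alpha^2=\frac{1}{a(t)}$, one obtains
\[
Lu+\frac{\partial u}{\partial t}\ \geq\ \big(a'(t)-2\kappa a(t)-2|\alpha|a(t)^2\big)\,\Gamma(g_t)+\Big(a(t)-\frac{1}{a(t)}\Big)\,\Gamma^R(g_t).
\]
The first coefficient vanishes since $a$ solves the Riccati equation, and the second is $\ge0$ since $a(t)\ge1$; as $\Gamma(g_t),\Gamma^R(g_t)\ge0$, this yields $Lu+\partial_t u\ge 0$ on $G\times[0,T]$.

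It remains to verify that Proposition~\ref{p.4.4.3} applies with $v\equiv0$. The function $u$ is smooth on $G\times[0,T]$. Since $f\in C_c^\infty(G)$, Theorem~\ref{t.4.4.1} applied to the left-invariant vector fields $X$, $Y$ and $R$ shows that $\sup_{0\le t\le T}\|XP_tf\|_\infty$, $\sup_{0\le t\le T}\|YP_tf\|_\infty$ and $\sup_{0\le t\le T}\|RP_tf\|_\infty$ are all finite, hence $\sup_{0\le t\le T}\|\Gamma(P_{T-t}f)\|_\infty<\infty$ and $\sup_{0\le t\le T}\|\Gamma^R(P_{T-t}f)\|_\infty<\infty$; since $a$ is bounded on $[0,T]$, $u$ is bounded uniformly in $t$. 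Proposition~\ref{p.4.4.3} with $v=0$ then gives $P_T(u(\cdot,T))(x)\ge u(x,0)$ for every $x\in G$, which is precisely \eqref{e.3102}.

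The genuinely delicate points are the guesswork at the start: recognizing that the weight in front of $\Gamma^R$ should be the \emph{constant} $\tfrac{1}{|\alpha|}$, that the free parameter of the curvature-dimension inequality should be coupled to the profile $a$ via $\nu=\frac{1}{|\alpha|a}$, and that the admissible range of $T$ is forced by the requirement that the solution $a$ of the resulting Riccati equation stay positive (equivalently $\ge1$) on $[0,T]$. The other point requiring care is the uniform-in-$t$ boundedness of $u$ needed to invoke the comparison principle, and this is exactly what Theorem~\ref{t.4.4.1} was set up to supply; everything else is routine substitution.
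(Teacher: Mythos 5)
Your proof is correct and follows essentially the same route as the paper: the same interpolation functional $a(t)\Gamma(P_{T-t}f)+b\,\Gamma^R(P_{T-t}f)$ with $b$ the constant $1/|\alpha|$, the curvature-dimension inequality applied with $\nu=b/a$, the same Riccati equation for $a$ (you simply normalize $a(0)=1$ and check $a\ge 1$ directly rather than via $a'\ge 0$), and the parabolic comparison principle. The only addition is that you spell out the boundedness hypothesis via Theorem~\ref{t.4.4.1}, which the paper notes just before the proposition.
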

\begin{proof}
Let $f\in C_c^\infty$, and let $T$ be a positive number to be determined later in the proof.  For $0\leq t\leq T$ and $x\in G$, set
\[\phi_1(x,t)=\Gamma(P_{T-t}f)(x)\]
and
\[ \phi_2(x,t)=\Gamma^R(P_{T-t}f)(x).\]
An easy computation shows that
\begin{align*}
L\phi_1+\frac{\partial\phi_1}{\partial t}&=2\Gamma_2(P_{T-t}f)
\end{align*}
and
\[L\phi_2+\frac{\partial\phi_2}{\partial t}=2\Gamma^R_2(P_{T-t}f).\]
Now set
\[\phi(x,t)=a(t)\phi_1(x,t)+b(t)\phi_2(x,t),\]
where $a(t)$ and $b(t)$ are positive differentiable functions.  Then using the curvature-dimension inequality Eq. (\ref{e.4.4.2.1}),
\begin{align*}
L\phi+\frac{\partial\phi}{\partial t} &= a'\phi_1(x,t)+b'\phi_2(x,t)+2a\Gamma_2(P_{T-t}f)+2b\Gamma_2^R(P_{T-t}f)\\ &=a'\phi_1(x,t)+b'\phi_2(x,t)+2a\left(\Gamma_2(P_{T-t}f)+\frac{b}{a}\Gamma_2^R(P_{T-t}f)\right)\\ &\geq a'\phi_1(x,t)+b'\phi_2(x,t)\\&+2a\left(\frac{1}{2}(L(P_{T-t}f))^2+\frac{1}{2}(1-\frac{b^2}{a^2}\alpha^2)\Gamma^R(P_{T-t}f)+(-\alpha^+-\beta^2-\frac{a}{b})\Gamma(P_{T-t}f)\right)\\& =\left(a'-2a(\alpha^++\beta^2)-\frac{2a^2}{b}\right)\phi_1(x,t)+\left(b'+a-\frac{\alpha^2b^2}{a}\right)\phi_2(x,t)\\&+a(L(P_{T-t}f))^2
\end{align*}
We now look for two functions $a(t)$ and $b(t)$ which solve
\begin{equation}\label{e.3001}
\begin{array}{ccc} a'(t)-2\kappa a(t)-2\frac{a(t)^2}{b(t)}&\geq & 0\\b'(t)+a(t)-\alpha^2\frac{b(t)^2}{a(t)}&\geq &0\end{array}
\end{equation}
where $\kappa:=\alpha^++\beta^2\geq 0$.  Note that the first equation above necessarily implies that $a'(t)\geq 0$.  Set $A=a(0)>0$.  Then since $a(t)\geq A$ for all $t$,
\[b'(t)+a(t)-\alpha^2\frac{b(t)^2}{a(t)}\geq b'(t)+A-\alpha^2\frac{b(t)^2}{A},\]
and so our candidate for $b(t)$ is a solution to the equation
\[b'(t)=-A+\alpha^2\frac{b(t)^2}{A}.\]
This is an autonomous equation with a constant solution $b(t)=\frac{A}{|\alpha|}$.  For this choice of $b(t)$, we find $a(t)$ by solving the equation
\[a'(t)=2\kappa a(t)+2\frac{|\alpha| a(t)^2}{A}\qquad a(0)=A,\]
which yields the solution
\begin{equation}\label{e.3100}
a(t)=\frac{A\kappa e^{2\kappa t}}{\kappa+|\alpha|(1-e^{2\kappa t})}.
\end{equation}
This function $a(t)$ is defined on the interval $[0,\frac{1}{2\kappa}\ln{\left(\frac{\kappa+|\alpha|}{|\alpha|}\right)})$.

With this choice of $a(t)$ and $b(t)$, when $0\leq t\leq T<\frac{1}{2\kappa}\ln{\left(\frac{\kappa+|\alpha|}{|\alpha|}\right)}$,
\begin{equation}\label{e.3101}
L\phi+\frac{\partial\phi}{\partial t}\geq a(L(P_{T-t}f))^2\geq 0
\end{equation}
It follows by the parabolic comparison theorem (Proposition \ref{p.4.4.3}) that
\[P_{T}(\phi(x,T))\geq \phi(x,0).\]
Now
\begin{align*}
\phi(x,0)&=a(0)\Gamma(P_Tf)(x)+b(0)\Gamma^R(P_Tf)\\&=A\Gamma(P_Tf)(x)+\frac{A}{|\alpha|}\Gamma^R(P_Tf)(x)
\end{align*}
while
\begin{align*}
P_T(\phi(x,T))&=a(T)P_T(\Gamma(f))+b(T)P_T(\Gamma^R(f))\\ &=\frac{A\kappa e^{2\kappa T}}{\kappa+|\alpha|(1-e^{2\kappa T})}P_T(\Gamma(f))+\frac{A}{|\alpha|}P_T(f\Gamma^R(f)).
\end{align*}
This yields Eq. (\ref{e.3102}).
\end{proof}

\begin{proposition}\label{p.4.4.6}
Suppose $G$ has parameters $(\alpha,\beta)$ with $\alpha\neq 0$ and set $\kappa=\beta^2+\alpha^+$.  Then for every $f\in C_c^\infty$ and $T>0$,
\begin{align*}
&\Gamma(P_T f)+\frac{1}{|\alpha|}\Gamma^R(P_Tf)-e^{2(\kappa+|\alpha|)T}P_T\Gamma(f)-\frac{1}{|\alpha|}P_T\Gamma^R(f)\\ &\leq |\alpha|e^{2(\kappa+|\alpha|)T}(e^{2(\kappa+|\alpha|)T}-1)\left(P_Tf^2-(P_Tf)^2\right).
\end{align*}
\end{proposition}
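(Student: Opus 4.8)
The plan is to mimic the proof of Proposition \ref{p.4.4.5}, but now run the parabolic comparison of Proposition \ref{p.4.4.3} not on a linear combination of $\Gamma(P_{T-t}f)$ and $\Gamma^R(P_{T-t}f)$ alone, but on an auxiliary function that also records the quantity $P_{T-t}\big((P_tf)^2\big)$ — the standard device for passing from a pure-gradient estimate to a reverse Poincar\'e inequality. Concretely, I would first recall the elementary identity $L\big((P_{T-t}f)^2\big)+\partial_t\big((P_{T-t}f)^2\big)=2\Gamma(P_{T-t}f)$, which follows from $L(g^2)=2gLg+2\Gamma(g)$ and the heat equation. Together with the identities already computed in Proposition \ref{p.4.4.5}, namely $L\phi_1+\partial_t\phi_1=2\Gamma_2(P_{T-t}f)$ and $L\phi_2+\partial_t\phi_2=2\Gamma_2^R(P_{T-t}f)$ for $\phi_1=\Gamma(P_{T-t}f)$, $\phi_2=\Gamma^R(P_{T-t}f)$, these are the three building blocks.

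Next I would set $\psi(x,t)=a(t)\phi_1(x,t)+b(t)\phi_2(x,t)-c(t)(P_{T-t}f)^2(x)$ for positive differentiable $a,b$ and a suitable $c$, and compute
\[
L\psi+\frac{\partial\psi}{\partial t}
= a'\phi_1+b'\phi_2-c'(P_{T-t}f)^2+2a\,\Gamma_2(P_{T-t}f)+2b\,\Gamma_2^R(P_{T-t}f)-2c\,\Gamma(P_{T-t}f).
\]
Applying the curvature-dimension inequality Eq. (\ref{e.4.4.2.1}) with $\nu=b/a$ exactly as in Proposition \ref{p.4.4.5}, the $\Gamma_2$-terms produce a nonnegative $\tfrac12(L(P_{T-t}f))^2$ contribution plus the coefficients $\big(a'-2\kappa a-2a^2/b-2c\big)\phi_1$ and $\big(b'+a-\alpha^2 b^2/a\big)\phi_2$, while the $(P_{T-t}f)^2$-term carries coefficient $-c'$. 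Choosing again $b(t)\equiv A/|\alpha|$ (so the $\phi_2$-coefficient vanishes with $a(0)=A$ as before) reduces matters to making $a'-2\kappa a-2|\alpha|a^2/A-2c\ge 0$ and $-c'\ge 0$; the cleanest route is to fix $c$ so that the $\phi_1$-coefficient is identically zero and then integrate. With the target exponent $\kappa+|\alpha|$ appearing in the statement, I expect the right choice is $a(t)=Ae^{2(\kappa+|\alpha|)(T-t)}$ or a close variant, which forces $c(t)$ to be (a multiple of) $|\alpha|e^{2(\kappa+|\alpha|)(T-t)}\big(e^{2(\kappa+|\alpha|)(T-t)}-1\big)$, matching the prefactor on the right-hand side of the claim. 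One then verifies $c'\le 0$ on $[0,T]$, so that $L\psi+\partial_t\psi\ge a\,(L(P_{T-t}f))^2\ge 0$.

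The conclusion then follows the same template as Proposition \ref{p.4.4.5}: Theorem \ref{t.4.4.1} guarantees the boundedness hypotheses of Proposition \ref{p.4.4.3} (the $\Gamma$ and $\Gamma^R$ terms are handled there; the $(P_{T-t}f)^2$ term is bounded since $f\in C_c^\infty$ and $P_t$ is a contraction on $L^\infty$), so $P_T(\psi(\cdot,T))\ge\psi(\cdot,0)$. Evaluating, $\psi(x,0)=A\Gamma(P_Tf)+\frac{A}{|\alpha|}\Gamma^R(P_Tf)-c(0)(P_Tf)^2$ while $P_T(\psi(\cdot,T))=a(T)P_T\Gamma(f)+\frac{A}{|\alpha|}P_T\Gamma^R(f)-c(T)P_T(f^2)$; rearranging and dividing by $A$ yields exactly the stated inequality, using $c(0)/A=|\alpha|e^{2(\kappa+|\alpha|)T}(e^{2(\kappa+|\alpha|)T}-1)$, $c(T)=0$, and $a(T)/A=e^{2(\kappa+|\alpha|)T}$.

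The main obstacle I anticipate is pinning down the exact ODE system for $(a,b,c)$ that makes all three coefficients have the right sign simultaneously \emph{and} reproduces the precise constants in the statement — in particular checking that the $b$-equation still closes with the constant choice $b\equiv A/|\alpha|$ once the extra $-2c\,\phi_1$ term is present (it should, since that term only affects the $\phi_1$-line), and verifying that the resulting $c(t)$ is genuinely nonincreasing on the relevant interval so that $-c'\ge 0$. Everything else is the by-now routine completion-of-squares bookkeeping already carried out in Proposition \ref{p.4.4.3.2} and the semigroup evaluation at the endpoints.
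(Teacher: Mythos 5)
Your overall strategy --- running the parabolic comparison of Proposition \ref{p.4.4.3} on $\phi=a\,\Gamma(P_{T-t}f)+b\,\Gamma^R(P_{T-t}f)$ augmented by a multiple of $(P_{T-t}f)^2$, with $b\equiv A/|\alpha|$ --- is a legitimate reformulation of the paper's argument, and the identity $\big(L+\partial_t\big)(P_{T-t}f)^2=2\Gamma(P_{T-t}f)$ is indeed where the variance comes from. But the specific choices you propose do not close. First, the time direction of $a$ is reversed: since $\phi_1(\cdot,0)=\Gamma(P_Tf)$ and $\phi_1(\cdot,T)=\Gamma(f)$, matching the stated coefficients forces $a(0)=A$ and $a(T)=Ae^{2(\kappa+|\alpha|)T}$, i.e.\ $a(t)=Ae^{2(\kappa+|\alpha|)t}$ increasing (as in the paper), not $Ae^{2(\kappa+|\alpha|)(T-t)}$. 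Second, and more seriously, choosing $c$ to annihilate the $\phi_1$-coefficient gives $c(t)=\tfrac12\big(a'-2\kappa a-2|\alpha|a^2/A\big)=|\alpha|Ae^{2(\kappa+|\alpha|)t}\big(1-e^{2(\kappa+|\alpha|)t}\big)$, so $c(0)=0$ and $c(T)=-|\alpha|Ae^{2(\kappa+|\alpha|)T}\big(e^{2(\kappa+|\alpha|)T}-1\big)$ --- not the boundary values $c(T)=0$, $c(0)>0$ you assert. With the actual values, the endpoint bookkeeping in $P_T\psi(\cdot,T)\ge\psi(\cdot,0)$ produces the right-hand side $|\alpha|e^{2(\kappa+|\alpha|)T}\big(e^{2(\kappa+|\alpha|)T}-1\big)P_T(f^2)$: the variance $P_Tf^2-(P_Tf)^2$ gets replaced by the larger quantity $P_Tf^2$, which is a strictly weaker conclusion than the proposition. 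A short check shows that the only admissible $c$ in your scheme (nonincreasing, $\phi_1$-coefficient nonnegative on $[0,T]$, endpoints reproducing the variance for all $f$) is the negative \emph{constant} $c\equiv-|\alpha|Ae^{2(\kappa+|\alpha|)T}\big(e^{2(\kappa+|\alpha|)T}-1\big)$, for which the $\phi_1$-coefficient is nonnegative but not identically zero.

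The paper avoids the auxiliary term altogether: it keeps $\phi=a\phi_1+b\phi_2$ exactly as in Proposition \ref{p.4.4.5} with $a(t)=Ae^{2(\kappa+|\alpha|)t}$, accepts that the $\phi_1$-coefficient $a'-2\kappa a-2|\alpha|a^2/A=2|\alpha|Ae^{2(\kappa+|\alpha|)t}\big(1-e^{2(\kappa+|\alpha|)t}\big)$ is negative, bounds it below by its value at $t=T$, and feeds $2|\alpha|Ae^{2(\kappa+|\alpha|)T}\big(1-e^{2(\kappa+|\alpha|)T}\big)\Gamma(P_{T-t}f)$ into Proposition \ref{p.4.4.3} as the source term $v$; the variance then appears from $\int_0^TP_t\Gamma(P_{T-t}f)\,dt=\tfrac12\big(P_Tf^2-(P_Tf)^2\big)$. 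This final integration (or, equivalently, the correct constant choice of $c$ above) is the ingredient missing from your write-up; as stated, your ansatz would only prove the weaker inequality with $P_Tf^2$ in place of $P_Tf^2-(P_Tf)^2$.
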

\begin{proof}
Let $T>0$ and define $\phi$ as in the proof of Proposition (\ref{p.4.4.5}).  Then by the same computations as above,
\begin{equation}\label{e.4000}
L\phi+\frac{\partial\phi}{\partial t}\geq \left(a'-2a\kappa-\frac{2a^2}{b}\right)\Gamma(P_{T-t}f)+\left(b'+a-\frac{\alpha^2b^2}{a}\right)\Gamma^R(P_{T-t}f)
\end{equation}
Assume that $a(t)$ is increasing with $a(0)=A$, in which case $b(t)=\frac{A}{|\alpha|}$ satisfies
\[b'+a-\frac{\alpha^2b^2}{a}\geq 0\]
This was shown in the proof of Proposition (\ref{p.4.4.5}) above.  For this choice of $b(t)$, then Eq. (\ref{e.4000}) becomes
\[L\phi+\frac{\partial \phi}{\partial t}\geq \left(a'-2a\kappa-\frac{2|\alpha|a^2}{A}\right)\Gamma(P_{T-t}f).\]
Now set
\[a(t)=Ae^{2(\kappa+|\alpha|)t}.\]
Then notice that
\[a'-2a\kappa-\frac{2|\alpha|a^2}{A} = 2|\alpha|Ae^{2(\kappa+|\alpha|)T}(1-e^{2(\kappa+|\alpha|)T})<0.\]
It follows that for this choice of $a(t)$, we now have
\[L\phi+\frac{\partial \phi}{\partial t}\geq 2|\alpha|Ae^{2(\kappa+|\alpha|)T}(1-e^{2(\kappa+|\alpha|)T})\Gamma(P_{T-t}f).\]
Now using the parabolic comparison theorem (Proposition \ref{p.4.4.3}) we have
\[P_{T}(\phi(x,T))\geq \phi(x,0)+2|\alpha|Ae^{2(\kappa+|\alpha|)T}(1-e^{2(\kappa+|\alpha|)T})\int_0^T P_t\Gamma(P_{T-t}f)~dt.\]
This gives the result after noticing that
\[P_t\Gamma(P_{T-t}f)=\frac{1}{2}\frac{d}{dt}P_t(P_{T-t}f)^2.\]

\end{proof}

\begin{bibdiv}
\begin{biblist}

\bib{AB}{article}{
   author={Agrachev, A.},
   author={Barilari, D.},
   title={Sub-Riemannian structures on 3D Lie groups},
   journal={J. Dyn. Control Syst.},
   volume={18},
   date={2012},
   number={1},
   pages={21--44},
   issn={1079-2724},
   review={\MR{2902707}},
   doi={10.1007/s10883-012-9133-8},
}
\bib{ABGR}{article}{
   author={Agrachev, Andrei},
   author={Boscain, Ugo},
   author={Gauthier, Jean-Paul},
   author={Rossi, Francesco},
   title={The intrinsic hypoelliptic Laplacian and its heat kernel on
   unimodular Lie groups},
   journal={J. Funct. Anal.},
   volume={256},
   date={2009},
   number={8},
   pages={2621--2655},
   issn={0022-1236},
   review={\MR{2502528 (2010c:58042)}},
   doi={10.1016/j.jfa.2009.01.006},
}

\bib{ARSC}{book}{
   author={Arscott, F. M.},
   title={Periodic differential equations. An introduction to Mathieu,
   Lam\'e, and allied functions},
   series={International Series of Monographs in Pure and Applied
   Mathematics, Vol. 66. A Pergamon Press Book},
   publisher={The Macmillan Co.},
   place={New York},
   date={1964},
   pages={x+284},
   review={\MR{0173798 (30 \#4006)}},
}

\bib{BBBC}{article}{
   author={Bakry, Dominique},
   author={Baudoin, Fabrice},
   author={Bonnefont, Michel},
   author={Chafa{\"{\i}}, Djalil},
   title={On gradient bounds for the heat kernel on the Heisenberg group},
   journal={J. Funct. Anal.},
   volume={255},
   date={2008},
   number={8},
   pages={1905--1938},
   issn={0022-1236},
   review={\MR{2462581 (2010m:35534)}},
   doi={10.1016/j.jfa.2008.09.002},
}

\bib{BB1}{article}{
   author={Baudoin, Fabrice},
   author={Bonnefont, Michel},
   title={The subelliptic heat kernel on ${\rm SU}(2)$: representations,
   asymptotics and gradient bounds},
   journal={Math. Z.},
   volume={263},
   date={2009},
   number={3},
   pages={647--672},
   issn={0025-5874},
   review={\MR{2545862 (2011d:58060)}},
   doi={10.1007/s00209-008-0436-0},
}
\bib{BB2}{article}{
   author={Baudoin, Fabrice},
   author={Bonnefont, Michel},
   title={Log-Sobolev inequalities for subelliptic operators satisfying a
   generalized curvature dimension inequality},
   journal={J. Funct. Anal.},
   volume={262},
   date={2012},
   number={6},
   pages={2646--2676},
   issn={0022-1236},
   review={\MR{2885961}},
   doi={10.1016/j.jfa.2011.12.020},
}

\bib{BG1}{article}{
   author={Baudoin, Fabrice},
   author={Garofalo, Nicola},
   title={Curvature-dimension inequalities and Ricci lower bounds for sub-Riemannian manifolds with transverse symmetries},
   eprint={http://arxiv.org/pdf/1101.3590v4.pdf},
}

\bib{BW1}{article}{
   author={Baudoin, Fabrice},
   author={Wang, Jing},
   title={The Subelliptic Heat Kernel on the CR Sphere},
   journal={To appear in Math. Zeit.}
   date={2013}
   eprint={http://arxiv.org/pdf/1112.3084v1.pdf},
}

\bib{BW2}{article}{
   author={Baudoin, Fabrice},
   author={Wang, Jing},
   title={Curvature dimension inequalities and subelliptic heat kernel gradient bounds on contact manifolds},
   eprint={http://arxiv.org/pdf/1211.3778v1.pdf},
}
		
\bib{Bo}{article}{
   author={Bonnefont, Michel},
   title={The subelliptic heat kernels on ${\rm SL}(2,\Bbb R)$ and on its
   universal covering $\widetilde{{\rm SL}(2,\Bbb R)}$: integral
   representations and some functional inequalities},
   journal={Potential Anal.},
   volume={36},
   date={2012},
   number={2},
   pages={275--300},
   issn={0926-2601},
   review={\MR{2886462}},
   doi={10.1007/s11118-011-9230-4},
}

\bib{CR}{book}{
   author={Dragomir, Sorin},
   author={Tomassini, Giuseppe},
   title={Differential geometry and analysis on CR manifolds},
   series={Progress in Mathematics},
   volume={246},
   publisher={Birkh\"auser Boston Inc.},
   place={Boston, MA},
   date={2006},
   pages={xvi+487},
   isbn={978-0-8176-4388-1},
   isbn={0-8176-4388-5},
   review={\MR{2214654 (2007b:32056)}},
}

\bib{FH}{book}{
   author={Fulton, William},
   author={Harris, Joe},
   title={Representation theory},
   series={Graduate Texts in Mathematics},
   volume={129},
   note={A first course;
   Readings in Mathematics},
   publisher={Springer-Verlag},
   place={New York},
   date={1991},
   pages={xvi+551},
   isbn={0-387-97527-6},
   isbn={0-387-97495-4},
   review={\MR{1153249 (93a:20069)}},
   doi={10.1007/978-1-4612-0979-9},
}
\bib{Ga}{article}{
   author={Gaveau, Bernard},
   title={Principe de moindre action, propagation de la chaleur et
   estim\'ees sous elliptiques sur certains groupes nilpotents},
   journal={Acta Math.},
   volume={139},
   date={1977},
   number={1-2},
   pages={95--153},
   issn={0001-5962},
   review={\MR{0461589 (57 \#1574)}},
}

\bib{VG}{incollection} {
    AUTHOR = {Vershik, A. M. and Gershkovich, V. Ya.},
     TITLE = {Nonholonomic dynamical systems. {G}eometry of distributions
              and variational problems},
 BOOKTITLE = {Current problems in mathematics. {F}undamental directions,
              {V}ol.\ 16 ({R}ussian)},
    SERIES = {Itogi Nauki i Tekhniki},
     PAGES = {5--85, 307},
 PUBLISHER = {Akad. Nauk SSSR Vsesoyuz. Inst. Nauchn. i Tekhn. Inform.},
   ADDRESS = {Moscow},
      YEAR = {1987},
   MRCLASS = {58A30 (58F17 70F25)},
  MRNUMBER = {922070 (89f:58007)},
MRREVIEWER = {J. Van{\v{z}}ura},
}

\bib{J}{book}{
   author={Jacobson, Nathan},
   title={Lie algebras},
   series={Interscience Tracts in Pure and Applied Mathematics, No. 10},
   publisher={Interscience Publishers (a division of John Wiley \& Sons),
   New York-London},
   date={1962},
   pages={ix+331},
   review={\MR{0143793 (26 \#1345)}},
}
\bib{NIST}{collection}{
   title={NIST handbook of mathematical functions},
   editor={Olver, Frank W. J.},
   editor={Lozier, Daniel W.},
   editor={Boisvert, Ronald F.},
   editor={Clark, Charles W.},
   note={With 1 CD-ROM (Windows, Macintosh and UNIX)},
   publisher={U.S. Department of Commerce National Institute of Standards
   and Technology},
   place={Washington, DC},
   date={2010},
   pages={xvi+951},
   isbn={978-0-521-14063-8},
   review={\MR{2723248 (2012a:33001)}},
}
\bib{robinson}{book}{
   author={Robinson, Derek W.},
   title={Elliptic operators and Lie groups},
   series={Oxford Mathematical Monographs},
   note={Oxford Science Publications},
   publisher={The Clarendon Press Oxford University Press},
   place={New York},
   date={1991},
   pages={xii+558},
   isbn={0-19-853591-0},
   review={\MR{1144020 (92m:58133)}},
}
\bib{sc_aspects}{book}{
   author={Saloff-Coste, Laurent},
   title={Aspects of Sobolev-type inequalities},
   series={London Mathematical Society Lecture Note Series},
   volume={289},
   publisher={Cambridge University Press},
   place={Cambridge},
   date={2002},
   pages={x+190},
   isbn={0-521-00607-4},
   review={\MR{1872526 (2003c:46048)}},
}
\bib{VSC}{book}{
   author={Varopoulos, N. Th.},
   author={Saloff-Coste, L.},
   author={Coulhon, T.},
   title={Analysis and geometry on groups},
   series={Cambridge Tracts in Mathematics},
   volume={100},
   publisher={Cambridge University Press},
   place={Cambridge},
   date={1992},
   pages={xii+156},
   isbn={0-521-35382-3},
   review={\MR{1218884 (95f:43008)}},
}

\bib{W}{article}{
   author={Wang, Jing},
   title={The Subelliptic Heat Kernel on the CR hyperbolic spaces},
   eprint={http://arxiv.org/pdf/1204.3642v1.pdf},
}
	
\end{biblist}
\end{bibdiv}
\end{document}